\definecolor{black}{rgb}{0.0, 0.0, 0.0}
\definecolor{red}{rgb}{1.0, 0.5, 0.5}
\newcommand{\margnote}[1]{
	\ifthenelse{\boolean{shownotes}}%
	{\marginpar{\raggedright\tiny\texttt{#1}}}%
	{}%
}
\newcommand{\hole}[1]{
	\ifthenelse{\boolean{shownotes}}%
	{\begin{center} \fbox{ \rule {.25cm}{0cm} \rule[-.1cm]{0cm}{.4cm}
				\parbox{.85\textwidth}{\begin{center} \texttt{#1}\end{center}} \rule
				{.25cm}{0cm}}\end{center}} {} }
\title[Cucker-Smale flocking particles with multiplicative noises]{Cucker-Smale flocking particles with multiplicative noises: stochastic mean-field limit and phase transition}
\author[Choi]{Young-Pil Choi}
\address[Young-Pil Choi]{\newline Department of Mathematics and Institute of Applied Mathematics
	\newline Inha University, Incheon 402--751, Korea}
\email{ypchoi@inha.ac.kr}
\author[Salem]{Samir Salem}
\address[Samir Salem]{\newline CEREMADE UMR CNRS 7534 \newline
	Universit\'e Paris-Dauphine, Place du Maréchal de Lattre de Tassigny 75775, Paris Cedex 16, France}
\email{salem@ceremade.dauphine.fr}
\numberwithin{equation}{section}
\newtheorem{theorem}{Theorem}[section]
\newtheorem{lemma}{Lemma}[section]
\newtheorem{proposition}{Proposition}[section]
\newtheorem{remark}{Remark}[section]
\newtheorem{definition}{Definition}[section]
\newcommand{\R}{\mathbb R}
\newcommand{\mc}{\mathcal C}
\newcommand{\bq}{\begin{equation}}
\newcommand{\eq}{\end{equation}}
\newcommand{\lt}{\left}
\newcommand{\rt}{\right}
\newcommand{\lal}{\langle}
\newcommand{\ral}{\rangle}
\newcommand{\mt}{\mathcal{T}}
\newcommand{\pp}{\mathcal{P}}
\newcommand{\E}{\mathbb{E}}
\newcommand{\mx}{\mathcal{X}}
\newcommand{\mv}{\mathcal{V}}
\newcommand{\mz}{\mathcal{Z}}
\begin{document}
	\allowdisplaybreaks
	
	\date{\today}
	
\keywords{Cucker-Smale model, flocking, stochastic mean-field limit, propagation of chaos, phase transition.}
	
	\begin{abstract} In this paper, we consider the Cucker-Smale flocking particles which are subject to the same velocity-dependent noise, which exhibits a phase change phenomenon occurs bringing the system from a ``non flocking'' to a ``flocking'' state as the strength of noises decreases. We rigorously show the stochastic mean-field limit from the many-particle Cucker-Smale system with multiplicative noises to the Vlasov-type stochastic partial differential equation as the number of particles goes to infinity. More precisely, we provide a quantitative error estimate between solutions to the stochastic particle system and measure-valued solutions to the expected limiting stochastic partial differential equation by using the Wasserstein distance. For the limiting equation, we construct global-in-time measure-valued solutions and study the stability and large-time behavior showing the convergence of velocities to their mean exponentially fast almost surely.  
	\end{abstract}
	
	\maketitle \centerline{\date}

	
	%
	%
	%
	%
	\section{Introduction}
	In the current work, we are interested in stochastic flocking systems with multiplicative noises in the Stratonovich sense. Let $(\Omega,\mathcal{F},(\mathcal{F}_t)_{t\geq 0},\mathbb{P})$ be a probability space endowed with a filtration $(\mathcal{F}_t)_{t \geq 0}$. Here $\Omega$ is the random set, $\mathbb{P}$ and $\mathcal{F}$ are measure and $\sigma$-algebra on the set, respectively. On that probability space, $(B_t)_{t \geq 0}$ denotes a real-valued Brownian motion. Let $X^i_t \in \R^d$ and $V^i_t \in \R^d$ be position and velocity of $i$-th particle at time $t\geq 0$, respectively, then our main stochastic differential equations read as follows:
	\begin{align}\label{main_sde}
		\begin{aligned}
			&dX^i_t = V^i_t\, dt, \quad i=1,\cdots, N, \quad t > 0,\cr
			&dV^i_t = F[\mu^N_t](X^i_t,V^i_t)\,dt + \sqrt{2\sigma}(\bar V_t - V^i_t) \circ dB_t, \quad \mu_t^N:=\frac1N \sum_{i=1}^N \delta_{(X^i_t,V^i_t)},
		\end{aligned}
	\end{align}
	or, equivalently, in the It\^o form,
	\begin{align}\label{main_sde_ito}
		\begin{aligned}
			&dX^i_t = V^i_t\, dt, \quad 1,\cdots, N, \quad t > 0,\cr
			&dV^i_t = F[\mu^N_t](X^i_t,V^i_t)\,dt - \sigma(\bar V_t - V^i_t)\,dt + \sqrt{2\sigma}(\bar V_t - V^i_t)\, dB_t, 
		\end{aligned}
	\end{align}
	subject to the deterministic initial data $(X^i_0,V^i_0)$, for $i=1,\cdots,N$. Here $\bar V_t$ is an averaged particle velocity, i.e., $\bar V_t := \frac1N \sum_{j=1}^N V^j_t$ and $F[\mu]$ represents a velocity alignment force given by
	\[
	F[\mu](x,v) := \int_{\R^d \times \R^d} \psi(|x-y|)(w-v)\, \mu(dy, dw) \quad \mbox{for} \quad \mu \in \pp(\R^d \times \R^d),
	\] 
	where $\psi: \R_+ \to \R_+$ called a communication weight, which is in general non-increasing function. Note that the stochastic particle system \eqref{main_sde} has locally Lipschitz coefficients, thus the system \eqref{main_sde} has a strong solutions and pathwise uniqueness holds, see \cite[Theorem 3.2]{Dur}. 
	
	When there is no noise, i.e., $\sigma = 0$, the stochastic particle system \eqref{main_sde} is reduced to the Cucker-Smale model \cite{CS07}. We refer to \cite{CCP, CHL} for a recent overview of Cucker-Smale and its variants. The system \eqref{main_sde} is proposed in \cite{AH} by taking into account uniform randomness in the communication weight function. More precisely, the system \eqref{main_sde} can be derived from the original Cucker-Smale model by replacing $\psi$ with $\psi + \sqrt{2\sigma}\eta_t$, where $\eta_t$ is $d$-dimensional Gaussian white noise. In \cite{AH}, a flocking estimate showing the relative positions are uniformly bounded in time and relative velocities converge to zero as time goes to infinity almost surely is obtained. Later, in \cite{TLY}, the phase change phenomenon from non flocking to flocking states in \eqref{main_sde} is observed by considering the convergence of relative velocities in the $L^2$-norm. For a flocking estimate of the Cucker-Smale model with $N$-independent noise of uniform strength, we refer to \cite{HLL}.  
	
Formal passage to the mean-field limit $N \to \infty$ for the particle system \eqref{main_sde} yields the following stochastic partial differential equation: 
	\begin{align}\label{main_spde}
		\begin{aligned}
			d\mu_t + (v \cdot \nabla_x \mu_t)dt + \lt(\nabla_v \cdot (F[\mu_t]\mu_t)\rt)dt + \sqrt{2\sigma} \nabla_v \cdot \lt((\bar v_t - v)\mu_t\rt) \circ dB_t = 0,
		\end{aligned}
	\end{align}
	or again equivalently, in the It\^o form:
	\begin{align}\label{main_spde_ito}
		\begin{aligned}
			&d\mu_t + (v \cdot \nabla_x \mu_t)dt + \lt(\nabla_v \cdot (F[\mu_t]\mu_t)\rt)dt + \sqrt{2\sigma} \nabla_v \cdot \lt((\bar v_t - v)\mu_t\rt) dB_t \cr
			&\qquad \qquad = \sigma \nabla_v \cdot ((\bar v_t - v)\nabla_v \cdot ((\bar v_t - v)\mu_t))dt,
		\end{aligned}\end{align}
		where $\bar v_t := \int_{\R^d \times \R^d} v\, \mu_t(dx,dv)$. The limiting equation \eqref{main_spde} is indeed stochastic in this case as shown for instance in \cite{Cog}, where a system of interacting particles are subject to the same space-dependent noise is discussed. This is due to the fact that the noise which drives the motion of each particle in \eqref{main_sde_ito} is the same. In classical McKean-Vlasov particle system, the noise seen by each particle are independent from each other \cite[Theorem 1.1]{Szn}, and the limiting equation becomes a deterministic diffusion equation. This result can be classically proved by coupling the $N$-particle system with independent initial condition to the $N$ independent copies of the nonlinear particle. It is worth emphasizing that the independence of noises in the system is important in that coupling method, see \cite{Szn} for more details on that. 
		
		The first purpose of this paper is to establish the global existence and uniqueness of measure-valued solutions to the stochastic partial differential equation \eqref{main_spde}, and the rigorous analysis of the stochastic mean-field limit of the system \eqref{main_sde}. As pointed out in \cite{Cog}, the equation \eqref{main_spde} can be understood as a standard transport PDE as the random $\omega \in \Omega$ is fixed. The empirical measure $(\mu_t^N)_{t\geq 0}$ associated to the stochastic particle system \eqref{main_sde} solves the stochastic partial differential equation \eqref{main_spde} for any finite $N$, see Section \ref{sec_weak} below for details. This enables us to take a strategy based on weak-weak/weak-strong stability estimates used for deterministic transport type equations \cite{CCH, CCHS, Dobr} for fixed $\omega \in \Omega$. More precisely, if $(\mu_t)_{t\geq 0}$ and $(\nu_t)_{t\geq 0}$ are two solutions to \eqref{main_spde} for the respective initial data $\mu_0$ and $\nu_0\in \mathcal{P}_2(\R^d\times \R^d)$ in the space $\mathcal{C}([0,T];\mathcal{P}_2(\R^d\times \R^d))$, we need to establish some (local in time) inequalities of the type:
		\bq
		\label{eq:exp}
		\sup_{t\in [0,T]}\E\lt[W_2(\mu_t,\nu_t) \rt] \leq C_T \E\lt[W_2(\mu_0,\nu_0)\rt], 
		\eq
		where $C_T$ is a nonnegative constant depending on the time and other parameters of the problem, or a weaker version
		\bq
		\label{eq:as}
		\sup_{t\in [0,T]}W_2(\mu_t,\nu_t) \leq C_T W_2(\mu_0,\nu_0) \quad \mbox{almost surely.}
		\eq	
Here $C_T$ is a nonnegative almost surely finite random variable depending on the time and other parameters of the problem. Here $W_2$ denotes the Wasserstein distance of order $2$ defined by
		\[
		W_2^2(\mu,\nu):=\inf_{\xi \in \Gamma(\mu,\nu)}\lt(\int_{\R^d \times \R^d} |x-y|^2 \xi(dx,dv) \rt) = \inf_{(X \sim \mu, Y \sim \nu)}\E[|X-Y|^2],
		\]
		where $\Gamma(\mu,\nu)$ is the set of all probability measures on $\R^d \times \R^d$ with first and second marginals $\mu$ and $\nu$, respectively, and $(X,Y)$ are all possible couples of random variables with $\mu$ and $\nu$ as respective laws. Compared to the classical case of globally Lipschitz and bounded potentials, the force fields in \eqref{main_sde} are only locally Lipschitz and bounded in velocity and the result by Dobrushin \cite{Dobr} cannot be directly applied. Note that a classical feature of the Cucker-Smale equation with nonnegative communication weight is to keep the speed of particle velocity bounded by the maximal speed at initial state. On the other hand, in the presence of diffusion, that maximum principle does not hold since the Brownian motion can make the velocities as high as wanted with some non-zero probability. In \cite{2jos}, similar Newtonian types of equations with independent standard Brownian motions, which have locally Lipschitz potentials, are considered, and the high speed of particle velocities are controlled by imposing the exponential moments bound. However, in our case for the stochastic transport PDE, we can obtain a $\mathbb{P}$-almost sure propagation of the compact support in velocity if the initial data is compactly supported in velocity. This only gives that the force fields are Lipschitz and bounded $\mathbb{P}$-almost surely, thus we can have a similar inequality as \eqref{eq:as}, but not the type of \eqref{eq:exp} since the Lipschitz constant of force fields is a random variable which does not have any exponential moments, see Proposition \ref{lem:stab}. This stability estimate enables us to approximate a solution $\mu_t$ to the equation \eqref{main_spde} by the empirical measure $\mu^N_t$ associated to the particle system \eqref{main_sde}, and in fact, this provides the stochastic mean-field limit. We remark that the mean-field limit of the particle system \eqref{main_sde} is studied in \cite{HJNXZ}, and a Fokker-Planck type equation is derived as the corresponding mean-field equation. However, that corresponds to the Cucker-Smale model with $N$-independent Brownian motions, i.e., adding $\sqrt{2\sigma}(\bar V_t - V^i_t) dB^i_t$, not the dependent Brownian motion appeared in \eqref{main_sde}.
		
		Our second goal in this paper is to discuss the phase change phenomenon in the limiting stochastic kinetic equation \eqref{main_spde} showing the transition from non flocking to flocking states as the strength of noises decreases. We notice that flocking behavior of solutions implies the concentration of velocities of particles, i.e., formation of a Dirac delta in velocity, see Remark \ref{rmk_ltt}. Thus it is natural to consider measure-valued solutions in our notion of solutions for the time-asymptotic behavior of solutions. Since the empirical measures associated to the particle system \eqref{main_sde} well approximate the measure-valued solutions to the stochastic partial differential equation \eqref{main_spde}, see Proposition \ref{lem:stab}, we can easily extend the result of phase change phenomenon at particle level to the infinite-dimensional one.
		
		Before stating our main results, we first introduce a notion of measure-valued solutions to the kinetic system \eqref{main_spde}. For this, we use a standard notation:
		\[
		\lal\nu, \phi \ral := \int_{\R^d \times \R^d} \phi(x,v)\,\nu(dx,dv), \quad \mbox{for} \quad \nu \in \pp_2(\R^d \times \R^d).
		\]
		\begin{definition}\label{def_weak} A family $\{\mu_t(w)\,:\,t\geq0,\,\omega\in \Omega \}$ of random probability measures taking value in $\pp_2(\R^d \times \R^d)$ is a measure-valued solution of the equation \eqref{main_spde} if 
			\begin{itemize}
				\item[(i)] $\mu_t$ is weakly continuous: for all $\phi \in \mc^2_c(\R^d \times \R^d)$, $\lal \mu_t, \phi\ral$ is an adapted process with a continuous version.
				\item[(ii)] $\mu_t$ satisfies the stochastic integral equation: for all $\phi \in \mc^2_c(\R^d \times \R^d)$,
				\[
				\lal\mu_t,\phi \ral = \lal \mu_0, \phi\ral + \int_0^t \lal \mu_s, v \cdot\nabla_x \phi + F[\mu_s]\cdot\nabla_v \phi\ral \,ds + \sqrt{2\sigma}\int_0^t \lal \mu_s, (\bar v_s - v)\cdot \nabla_v \phi \ral \circ dB_s.
				\]
			\end{itemize}
		\end{definition}
		\begin{remark} The weak formulation in Definition \ref{def_weak} can be rewritten as 
			$$\begin{aligned}
			\lal\mu_t,\phi \ral &= \lal \mu_0, \phi\ral + \int_0^t \lal \mu_s, v \cdot\nabla_x \phi + (F[\mu_s]- \sigma(\bar v_s - v))\cdot\nabla_v \phi \ral \,ds \cr
			&\qquad + \sqrt{2\sigma}\int_0^t \lal \mu_s, (\bar v_s - v)\cdot \nabla_v \phi \ral \,dB_s
			+ \sigma \int_0^t \lal \mu_s, (\bar v_s - v )\otimes (\bar v_s - v) : \nabla^2_v \phi \ral\,ds.
			\end{aligned}$$
		\end{remark}
		
		We now state our first result on the global existence and uniqueness of measure-valued solutions to the stochastic partial differential equation \eqref{main_spde}.
		\begin{theorem}\label{thm_smf}
			Let $\mu_0\in \mathcal{P}_2(\R^d \times \R^d)$ be compactly supported in velocity and $T>0$. Suppose that the communication weight $\psi \in \mc^1_b(\R_+)$. Then there exists at most one measure-valued solution to equation \eqref{main_sde_ito} $\mu_.\in \mathcal{C}\lt([0,T],  \mathcal{P}_2(\R^d \times \R^d)\rt)$ in the sense of Definition \ref{def_weak}, which is almost surely compactly supported in velocity. Moreover, $\mu_t$ is determined as the push-forward of the initial density through the stochastic flow map generated by the local Lipschitz field $(v, F[\mu_t] + \sigma (v  - \bar v_t) - \sqrt{2\sigma} (v - \bar v_t)dB_t/dt)$ in phase space. Furthermore, if $\mu$ and $\tilde \mu$ are two such solutions to the equation \eqref{main_sde_ito} with compactly supported initial data $\mu_0$ and $\tilde\mu_0$ in velocity, we have
			\[
			W_2(\mu_t, \tilde\mu_t) \leq CW_2(\mu_0, \tilde\mu_0)e^{C(1 + W_2(\mu_0, \tilde\mu_0))},
			\]
			for $t \in [0,T]$ almost surely, where the constant $C$ depends only on $\psi,T,\sigma$, $\sup_{t \in [0,T]}|B_t|$, and the support in velocity of $\mu_0$ and $\tilde \mu_0$.
		\end{theorem}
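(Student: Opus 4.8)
The plan is to exploit the observation made in the introduction: for fixed $\omega\in\Omega$ the equation \eqref{main_spde} (equivalently \eqref{main_sde_ito}) is a linear transport equation driven by the time-dependent vector field
\[
b_t^\omega(x,v) := \bigl(v,\; F[\mu_t](x,v) + \sigma(v-\bar v_t) - \sqrt{2\sigma}(v-\bar v_t)\dot B_t\bigr),
\]
so the whole problem reduces, pathwise, to a Dobrushin/Cucker–Smale-type stability estimate for transport equations with a vector field that is only locally Lipschitz in $v$ but becomes globally Lipschitz once we control the velocity support. First I would establish \emph{a priori} propagation of compact support in velocity: if $(X_t,V_t)$ solves the characteristic SDE $dX_t = V_t\,dt$, $dV_t = F[\mu_t](X_t,V_t)\,dt + \sqrt{2\sigma}(\bar V_t - V_t)\circ dB_t$, then writing $R_t := \sup\{|v-\bar v_t| : (x,v)\in\operatorname{supp}\mu_t\}$ and using that $\psi\ge 0$ damps the alignment term (so $\frac{d}{dt}|V_t-\bar v_t|^2 \le 0$ from the deterministic part) while the Stratonovich noise is a pure rotation/scaling of $V_t-\bar v_t$ with no drift, one gets $R_t \le R_0\, e^{\sqrt{2\sigma}\,\sup_{[0,T]}|B_t|}$ — or a comparably explicit bound — almost surely. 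This is the step that replaces the maximum principle of the deterministic Cucker–Smale model; the key point is that the noise coefficient $(\bar v_t - v)$ vanishes on the diagonal $v=\bar v_t$, so the support can only grow by the (bounded-in-time, a.s.-finite) Brownian factor, not unboundedly. Consequently $\|\bar v_t\|_\infty$, $\|F[\mu_t]\|_\infty$ on the support, and the Lipschitz constant of $b_t^\omega$ in $(x,v)$ restricted to the (time-uniform) support are all bounded by an a.s.-finite random constant $C = C(\psi,T,\sigma,\sup_{[0,T]}|B_t|,R_0,R_0^{\tilde\mu})$.

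With the support bound in hand, existence and uniqueness follow the standard route. For uniqueness/stability: given two solutions $\mu_t,\tilde\mu_t$, fix $\omega$, let $(X_t,V_t)$ and $(\tilde X_t,\tilde V_t)$ be the two stochastic flows and take an optimal coupling $(X_0,V_0,\tilde X_0,\tilde V_0)$ realizing $W_2(\mu_0,\tilde\mu_0)$; push it forward along the two flows and estimate $\E_{\text{coupling}}[|X_t-\tilde X_t|^2 + |V_t-\tilde V_t|^2]$ (expectation only over the initial coupling, $\omega$ fixed). Differentiating, using $dX = V\,dt$, the Lipschitz bound on $F$ and the $L^\infty\!-\!L^1$ stability estimate $|F[\mu_t](x,v) - F[\tilde\mu_t](\tilde x,\tilde v)| \lesssim |x-\tilde x| + |v-\tilde v| + W_1(\mu_t,\tilde\mu_t)$ on the common support — this is the usual Cucker–Smale force-field Lipschitz lemma, cf. \cite{CCH,CCHS,Dobr} — together with the fact that the Itô noise term $\sqrt{2\sigma}(\bar v_t - \bar{\tilde v}_t - V_t + \tilde V_t)\,dB_t$ has quadratic variation controlled by the same quantities and the Itô correction is likewise quadratic in the differences, a Grönwall argument in the pathwise variable gives
\[
\sup_{t\in[0,T]}\bigl(|X_t-\tilde X_t|^2 + |V_t-\tilde V_t|^2\bigr) \le C\,\bigl(|X_0-\tilde X_0|^2 + |V_0-\tilde V_0|^2\bigr)\,e^{C(1+W_2(\mu_0,\tilde\mu_0))}
\]
almost surely (the extra $W_2(\mu_0,\tilde\mu_0)$ in the exponent coming from the fact that the support radius of $\tilde\mu$, hence the Lipschitz constant, grows with the initial data). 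Taking expectation over the optimal initial coupling and then the square root yields the claimed bound $W_2(\mu_t,\tilde\mu_t)\le C\,W_2(\mu_0,\tilde\mu_0)\,e^{C(1+W_2(\mu_0,\tilde\mu_0))}$; uniqueness is the special case $\mu_0 = \tilde\mu_0$, and the representation of $\mu_t$ as the push-forward of $\mu_0$ by the flow is built into the construction.

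For existence: with the vector field $b_t^\omega$ globally Lipschitz on a ball containing all supports, the characteristic SDE has a unique strong solution $Z_t^\omega(z_0) = (X_t^\omega, V_t^\omega)$ depending measurably on $z_0$; set $\mu_t := (Z_t^\omega)_\#\mu_0$. One then checks that $\mu_t$ satisfies Definition \ref{def_weak} by applying Itô's formula (in Stratonovich form) to $\phi(Z_t^\omega(z_0))$ for $\phi\in\mc^2_c$ and integrating against $\mu_0(dz_0)$; weak continuity and adaptedness of $\langle\mu_t,\phi\rangle$ follow from continuity of the flow and of $t\mapsto B_t$, and $\mu_.\in\mc([0,T];\pp_2)$ from the a.s. support bound plus continuity of characteristics. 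The main obstacle in this whole scheme is the first step — making the compact-support propagation in velocity rigorous and \emph{quantitative} in $\sup_{[0,T]}|B_t|$, since $\mu_t$ appears inside $F[\mu_t]$ so one formally has a fixed-point/self-consistency issue (the support bound is needed to know the field is Lipschitz, but the field defines the flow that defines $\mu_t$); this is resolved by a standard Picard iteration on $[0,T]$ in which the support estimate is propagated at each step, using that $\psi\ge 0$ and the diagonal-vanishing of the noise coefficient keep the iteration's support radius bounded uniformly. Once that is secured, everything else is the by-now-classical transport-stability machinery applied $\omega$ by $\omega$.
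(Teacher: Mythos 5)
Your overall strategy --- fix $\omega$, read \eqref{main_spde} as a transport equation along stochastic characteristics, propagate the compact velocity support to make the field Lipschitz a.s., then run an optimal-coupling Gr\"onwall estimate for $W_2$ --- is exactly the paper's. You genuinely diverge in two places, both defensibly. For the support bound you invoke the Cucker--Smale maximum principle (the alignment force is non-expanding at the point of the support farthest from $\bar v_t$) together with the fact that the Stratonovich noise acts on $v-\bar v_t$ as the pure scaling $e^{-\sqrt{2\sigma}B_t}$; made rigorous, this is actually sharper than the paper's Lemma \ref{lem:EstVel}, which only bounds $\langle \mv_s,F[\mu_s]\rangle$ by $\psi_M$ times second moments and pays an extra $e^{\psi_M t}$ factor. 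For existence you propose a Picard iteration on the self-consistent characteristics, whereas the paper passes to the limit of the empirical measures $\mu^N_t$ of \eqref{main_sde} (exact weak solutions by Section \ref{sec_weak}), using the stability estimate to show the sequence is Cauchy in $\mathcal{C}([0,T];\pp_2)$; your route is the classical Dobrushin one, the paper's buys the mean-field limit as a by-product but must then verify that the limit still satisfies Definition \ref{def_weak} (convergence of the stochastic integral term, Kolmogorov continuity), work you would avoid.

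There is one step where your argument, as written, does not close. After It\^o's formula the coupled quantity $Q_t=\E_{\rm coupling}\lt[|V_t-\tilde V_t|^2\rt]$ satisfies an inequality of the form $dQ_t\leq (c\,Q_t+A_t)\,dt-2\sqrt{2\sigma}\,Q_t\,dB_t$, and ``a Gr\"onwall argument in the pathwise variable'' is not available: the martingale term has no sign, so ordinary Gr\"onwall gives nothing almost surely, while taking expectations is also barred here because the constant $c$ is random (through $\sup_{[0,T]}|B_t|$ and the support radius) and has no exponential moments --- precisely the obstruction the paper flags against estimates of type \eqref{eq:exp}. What closes the loop is the stochastic Gr\"onwall/comparison lemma (Lemma \ref{lem_gro}), which uses that the noise coefficient is exactly $-c_2Q_t$ to factor out the geometric factor $e^{-c_2B_t+c_2^2t/2}$; you need it both for the support estimate and for the stability estimate. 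A secondary point: deducing uniqueness as ``the case $\mu_0=\tilde\mu_0$'' presupposes that \emph{every} measure-valued solution in the sense of Definition \ref{def_weak} is the push-forward of its initial datum by its own characteristics, not just the one you construct; the paper supplies this by noting that $\mz_t\#\mu_0$ solves the linear SPDE with frozen coefficient $F[\mu_t]$ and invoking uniqueness for that linear equation. Add that representation step and the stochastic Gr\"onwall lemma and your proposal matches the paper's proof.
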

		
		\begin{remark} As mentioned before, the empirical measure $\mu^N_t = \frac1N\sum_{i=1}^N \delta_{(X^i_t,V^i_t)}$ associated to the particle system \eqref{main_sde} is the solution to the stochastic partial differential equation \eqref{main_spde} in the sense of Definition \ref{def_weak}, see Section \ref{sec_weak}. Thus it follows from the stability estimate in Theorem \ref{thm_smf} that
			\[
			\sup_{0 \leq t \leq T}W_2(\mu_t, \mu^N_t) \leq CW_2(\mu_0, \mu^N_0)e^{C(1 + W_2(\mu_0, \mu^N_0))},
			\]
			where $C$ is a random variable independent of $N$. Thus if $W_2(\mu_0, \mu^N_0) \to 0$ as $N \to \infty$, we have
			\[
			\sup_{0 \leq t \leq T}W_2(\mu_t, \mu^N_t) \quad \mbox{as} \quad N \to \infty, \quad \mbox{almost surely}.
			\] 
			Note that we can construct the initial atomic measures $\mu^N_0$ approximating the initial data $\mu_0$ such that $W_2(\mu_0, \mu^N_0) \to 0$ as $N \to \infty$ in the standard way: we define a regular mesh of size $1/N$ and approximate $\mu_0$ by a sum of Dirac masses $\mu^N_0$ located at the center of the regular cells such that the mass at each particle is exactly equals to the mass of $\mu_0$ contained in the associated cell. Then we get $W_2(\mu_0, \mu^N_0) \sim 1/N \to 0$ as $N \to \infty$.
		\end{remark}
		
		Our second result on the phase change phenomenon from a ``non flocking'' to a ``flocking'' state depending on the strength of noises is presented below. In order to state our theorem, we need to introduce a variance functional of the stochastic particle velocity fluctuation around $\bar v_0$:
		\[
		E[\mu_t] := \int_{\R^d \times \R^d} |\bar v_0 - v|^2 \mu_t(dx,dv).
		\]
		For the sake of notational simplicity, we denote by $E_t := E[\mu_t]$. 
		\begin{theorem}\label{thm_spde_f}Let $\mu_t$ be a measure-valued solution for the equation \eqref{main_spde}. Suppose that the communication weight function $\psi$ satisfies $0 < \psi_m \leq \psi(s) \leq \psi_M$ for $s \in \R_+$. Then we have
			\[
			\E[E_0]e^{-2(\psi_M - 2\sigma) t} \leq \E [E_t] \leq \E[E_0]e^{-2(\psi_m - 2\sigma) t} \quad t \geq 0.
			\]
			This subsequently implies
			\[
			\lim_{t \to \infty}\E[E_t] = \left\{ \begin{array}{ll}
			0 & \textrm{if $\psi_m > 2\sigma$,}\\[2mm]
			\infty & \textrm{if $\psi_M < 2\sigma$.}
			\end{array} \right.
			\]
		\end{theorem}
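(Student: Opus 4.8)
The strategy is to test the It\^o form of the weak formulation (the Remark after Definition \ref{def_weak}) against $\phi(x,v)=|\bar v_0-v|^2$; after symmetrizing the interaction term this produces a closed It\^o equation for $E_t$, from which the two-sided bound follows by Gr\"onwall's lemma. Two preliminary remarks. First, $\phi\notin\mc^2_c$, so the computations below are to be carried out with a smooth cutoff $\phi_R$ agreeing with $|\bar v_0-v|^2$ on $\{|v-\bar v_0|\le R\}$, and one then lets $R\to\infty$; this is legitimate because $\mu_\cdot\in\mc([0,T];\pp_2(\R^d\times\R^d))$ and, as shown below, $\sup_{t\le T}\E[E_t]<\infty$. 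Second, testing with $\phi(x,v)=v$ (componentwise) shows that the mean velocity is conserved: the transport term vanishes, $\lal\mu_s,F[\mu_s]\ral=\iint\psi(|x-y|)(w-v)\,\mu_s(dx,dv)\mu_s(dy,dw)=0$ by antisymmetry in $(x,v)\leftrightarrow(y,w)$, and $\lal\mu_s,\bar v_s-v\ral=0$ kills both the $-\sigma(\bar v_s-v)$ drift and the noise term, so $\bar v_t=\bar v_0$ almost surely for all $t$. In particular $E_t=\lal\mu_t,|v-\bar v_t|^2\ral$ is the genuine velocity variance and $\iint|w-v|^2\mu_t\mu_t=2E_t$.

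With $\phi=|\bar v_0-v|^2$ one has $\nabla_x\phi=0$, $\nabla_v\phi=2(v-\bar v_0)$, $\nabla_v^2\phi=2\,\mathrm{Id}$. Symmetrizing as above, $\lal\mu_s,F[\mu_s]\cdot2(v-\bar v_0)\ral=-\iint\psi(|x-y|)|w-v|^2\mu_s\mu_s$; using $\bar v_s=\bar v_0$, both $-\sigma\lal\mu_s,(\bar v_s-v)\cdot2(v-\bar v_0)\ral$ and $\sigma\lal\mu_s,(\bar v_s-v)\otimes(\bar v_s-v):2\,\mathrm{Id}\ral$ equal $2\sigma E_s$; and the stochastic integrand equals $\sqrt{2\sigma}\lal\mu_s,(\bar v_s-v)\cdot2(v-\bar v_0)\ral=-2\sqrt{2\sigma}E_s$. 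Hence
\[
E_t=E_0-\int_0^t\iint\psi(|x-y|)|w-v|^2\,\mu_s(dx,dv)\mu_s(dy,dw)\,ds+4\sigma\int_0^tE_s\,ds-2\sqrt{2\sigma}\int_0^tE_s\,dB_s.
\]

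To conclude, localize at $\tau_R:=\inf\{t:E_t\ge R\}$, which increases to $+\infty$ almost surely by $\pp_2$-continuity of $\mu_\cdot$; then the stochastic integral stopped at $\tau_R$ is a true martingale, and taking expectations and dropping the nonnegative interaction term gives $\E[E_{t\wedge\tau_R}]\le\E[E_0]+4\sigma\int_0^t\E[E_{s\wedge\tau_R}]\,ds$, so $\E[E_{t\wedge\tau_R}]\le\E[E_0]e^{4\sigma t}$ uniformly in $R$ and, by Fatou, $\sup_{t\le T}\E[E_t]<\infty$. This bound (together with a Burkholder--Davis--Gundy estimate for $\E\sup_{s\le t}E_{s\wedge\tau_R}^2$, or directly when $\mu_0$ is compactly supported in velocity, in which case $E_t$ is dominated by a function of $\sup_{[0,T]}|B_t|$) justifies removing the localization and the martingale term, so that $t\mapsto\E[E_t]$ is absolutely continuous with $\frac{d}{dt}\E[E_t]=-\E\big[\iint\psi(|x-y|)|w-v|^2\mu_t\mu_t\big]+4\sigma\E[E_t]$ for a.e.\ $t$. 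Since $\psi_m\le\psi\le\psi_M$ and $\iint|w-v|^2\mu_t\mu_t=2E_t$, this yields $-2(\psi_M-2\sigma)\E[E_t]\le\frac{d}{dt}\E[E_t]\le-2(\psi_m-2\sigma)\E[E_t]$ for a.e.\ $t\ge0$, and Gr\"onwall's inequality applied to each side gives $\E[E_0]e^{-2(\psi_M-2\sigma)t}\le\E[E_t]\le\E[E_0]e^{-2(\psi_m-2\sigma)t}$. The dichotomy for $\lim_{t\to\infty}\E[E_t]$ then follows at once from these exponential bounds (in the case $\psi_M<2\sigma$ using $\E[E_0]>0$; if $\E[E_0]=0$ the bounds force $\E[E_t]\equiv0$).

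The computation itself is elementary once the conservation $\bar v_t=\bar v_0$ is in hand; the only genuine technical point is the truncation/localization that legitimizes testing against the non-compactly-supported function $|\bar v_0-v|^2$ and the vanishing of the stochastic integral's expectation. This requires the a priori moment bound $\sup_{t\le T}\E[E_t]<\infty$ plus a uniform-integrability argument when the cutoff is removed --- routine given $\mu_\cdot\in\mc([0,T];\pp_2)$, and essentially trivial in the compactly-supported-in-velocity regime of Theorem \ref{thm_smf}.
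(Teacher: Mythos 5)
Your proof is correct and follows essentially the same route as the paper: conservation of $\bar v_t$, the It\^o equation $dE_t=-\iint\psi|w-v|^2\mu_t\mu_t\,dt+4\sigma E_t\,dt-2\sqrt{2\sigma}E_t\,dB_t$, taking expectations, and a two-sided Gr\"onwall argument. The only difference is that you supply the cutoff/localization details justifying the non-compactly-supported test function and the vanishing expectation of the stochastic integral, which the paper passes over silently.
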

		\begin{remark}\label{rmk_ltt} It follows from Theorem \ref{thm_spde_f} that 
			\[
			W_1(\mu_t(x,v), \rho_t(x) \delta_{\bar v_0}(v)) \to 0 \quad \mbox{as} \quad t \to \infty \quad \mbox{in probability},
			\]
			where $W_1$ denotes the Wasserstein distance of order $1$ and $\rho_t$ is the random spatial probability, i.e., $\rho_t(x) := \int_{\R^d} \mu_t(dv)$. Indeed, for any bounded Lipschitz function $\phi$, we find
			$$\begin{aligned}
			&\lt|\int_{\R^d \times \R^d} \phi(x,v) \mu_t(dx, dv) - \int_{\R^d \times \R^d} \phi(x,v) \rho_t(dx)\delta_{\bar v_0}(dv) \rt|\cr
			&\quad = \lt|\int_{\R^d \times \R^d} \lt( \phi(x,v) - \phi(x,\bar v_0)\rt)\mu_t(dx, dv)\rt| \leq \|\phi\|_{Lip}\int_{\R^d \times \R^d} |v - v_0| \,\mu_t(dx, dv) \leq \|\phi\|_{Lip}E_t^{1/2} \to 0, 
			\end{aligned}$$
			as $t \to \infty$ in probability, due to Theorem \ref{thm_spde_f}. This, together with the fact that $W_1$ is equivalent to the bounded Lipschitz distance, concludes the desired result.
		\end{remark}
		\begin{remark}\label{rmk_without} We can obtain the convergence of the variance functional $E_t$ without taking the expectation. More precisely, we find the following almost surely convergence when $\psi(s) \geq \psi_m > 0$:
			\[
			\int_{\R^d \times \R^d} |\bar v_0 - v|^2 \mu_t(dx, dv) \to 0 \quad \mbox{as} \quad t \to \infty, \quad a.s.,
			\]
			at least exponentially fast. Find the details of the proof in Proposition \ref{prop_without}.
		\end{remark}
		
		The rest of the paper is organized as follows. In Section \ref{sec_pre}, we show that the empirical measures associated to the particle system \eqref{main_sde} are solutions to the equation \eqref{main_spde} in the sense of Definition \ref{def_weak}. We also present a stochastic Gronwall type inequality which will be used later for the almost surely bound estimate of compact support of solutions in velocity. Using that support bound estimate in velocity together with the weak stability estimate, we provide details on the proof of Theorem \ref{thm_smf} in Section \ref{sec_well}. Finally, we show the phase change phenomenon of the stochastic partial differential equation \eqref{main_spde}, which proves Theorem \ref{thm_spde_f}, in Section \ref{sec_lt}.
		
		%
		%
		%
		%
		
		\section{Preliminaries}\label{sec_pre}
		
		\subsection{It\^o's formula}\label{sec_weak} In this part, we show that the empirical measures associated to the stochastic particle system \eqref{main_sde_ito} are weak solutions to the stochastic partial differential equation by employing the It\^o's formula. For the sake of mathematical simplicity, we work in the corresponding It\^o form \eqref{main_sde_ito}. We want to emphasize that this observation implies the limiting system cannot be deterministic since the empirical measures are stochastic for any $N$. 
		
		For $\phi \in \mc^2_b(\R^d \times \R^d)$, if we apply for It\^o's formula to the system \eqref{main_sde_ito}, then we obtain
		$$\begin{aligned}
		\phi(X^i_t,V^i_t) &= \phi(X^i_0,V^i_0) + \int_0^t \nabla_x \phi(X^i_s,V^i_s) \cdot V^i_s\,ds + \int_0^t \nabla_v \phi(X^i_s,V^i_s) \cdot F[\mu^N_s](X^i_s,V^i_s)\,ds\cr
		&\quad -\sigma\int_0^t \nabla_v \phi(X^i_s,V^i_s)\cdot(\bar V_s - V^i_s)\,ds+ \sqrt{2\sigma}\int_0^t \nabla_v \phi(X^i_s,V^i_s)\cdot(\bar V_s - V^i_s)\,dB_s \cr
		&\quad + \sigma\int_0^t (\bar V_s - V^i_s)\otimes(\bar V_s - V^i_s) : \nabla_v^2 \phi(X^i_s,V^i_s) \,ds.
		\end{aligned}$$
		Averaging the above equation over $i=1,\cdots,N$ deduces
		$$\begin{aligned}
		\int_{\R^d \times \R^d} \phi(x,v)\mu^N_t(dx,dv) 
		&= \int_{\R^d \times \R^d} \phi(x,v)\mu^N_0(dx,dv) + \int_0^t \int_{\R^d \times \R^d} \nabla_x \phi(x,v) \cdot v \,\mu^N_s(dx,dv)\,ds\cr
		&\quad + \int_0^t \int_{\R^d \times \R^d} \nabla_v \phi(x,v) \cdot F[\mu^N_s](x,v)\, \mu^N_s(dx,dv)\,ds\cr
		&\quad - \sigma\int_0^t \int_{\R^d \times \R^d} \nabla_v \phi(x,v) \cdot (\bar V_s - v)\,\mu^N_s(dx,dv)\,ds\cr
		&\quad + \sqrt{2\sigma}\int_0^t \int_{\R^d \times \R^d} \nabla_v \phi(x,v) \cdot (\bar V_s - v)\,\mu^N_s(dx,dv)\,dB_s\cr
		&\quad + \sigma\int_0^t \lt((\bar V_s - v) \otimes (\bar V_s - v)\rt) : \nabla_v^2\phi(x,v)\, \mu^N_s(dx,dv)\,ds.
		\end{aligned}$$
		Note that $\bar V_t = \int_{\R^d \times \R^d} v \mu^N_t(dx,dv)$. We also easily check that 
		$$\begin{aligned}
		&\int_0^t \int_{\R^d \times \R^d} \phi(x,v) \nabla_v \cdot \lt((\bar v_s - v)\nabla_v \cdot ((\bar v_s - v)\mu_s(dx,dv))\rt)ds\cr
		&\quad = - \int_0^t \int_{\R^d \times \R^d} \nabla_v \phi(x,v) \cdot ((\bar v_s - v)\nabla_v \cdot ((\bar v_s - v)\mu_s(dx,dv)))\,ds\cr
		&\quad =  \int_0^t \int_{\R^d \times \R^d} \nabla_v \phi(x,v) \cdot (\bar v_s - v)\mu_s(dx,dv)\,ds  - \int_0^t \int_{\R^d \times \R^d} \nabla_v \phi(x,v) \cdot \nabla_v \cdot ((\bar v_s - v)\otimes (\bar v_s - v)\mu_s(dx,dv))\,ds\cr
		&\quad =  \int_0^t \int_{\R^d \times \R^d} \nabla_v \phi(x,v) \cdot (\bar v_s - v)\mu_s(dx,dv)\,ds + \int_0^t \int_{\R^d \times \R^d} \nabla_v^2 \phi(x,v) : ((\bar v_s - v)\otimes (\bar v_s - v))\mu_s(dx,dv)\,ds,
		\end{aligned}$$
		where we used
		\[
		\nabla_v \cdot ((\bar v_s - v)\otimes (\bar v_s - v)\mu_s) = (\bar v_s - v)\mu_s + (\bar v_s - v)\nabla_v \cdot  ((\bar v_s - v)\mu_s).
		\]
		This yields that $\mu^N_t$ associated to \eqref{main_sde_ito} satisfies the weak formulation in Definition \ref{def_weak}. Furthermore, $\lal \mu^N_t, \phi\ral$ with $\phi \in \mc^2_b(\R^d \times \R^d)$ is $\mathcal{F}_t$-adapted since the processes $(X^i_t, V^i_t)_{i=1,\cdots,N}$ are solutions to the system \eqref{main_sde}. Thus they are adapted and continuous and this gives that $\mu^N_t$ is a weak solution to the system \eqref{main_spde_ito} in the sense of Definition \ref{def_weak}.

		\subsection{Stochastic Gronwall type inequality}
		In this subsection, we provide a stochastic Gronwall type inequality which will be crucially used in this paper.

		\begin{lemma}\label{lem_gro} 
			Let $X_t$ be a real value process satisfaying
			\[
			X_t= X_0+\int_0^t \beta_s \,ds -c_2\int_0^t X_s\, dB_s \quad \mbox{for} \quad  t \geq 0.
			\]
			Furthermore, we assume that 
			\[
			\beta_s\leq c_1 X_s+A_s \quad \mbox{for} \quad s \geq 0,
			\]
			then it holds 
			\[
			X_t\leq X_0 e^{(c_1-c_2^2/2)t}e^{-c_2B_t}\lt(\int_0^t e^{-(c_1-c_2^2/2)s}e^{c_2B_s} A_s\, ds+1\rt),
			\]
			with probability one.
		\end{lemma}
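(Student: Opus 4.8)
The plan is to use an integrating-factor (variation-of-constants) argument that simultaneously removes the linear drift $c_1X_s$ and the stochastic integral $-c_2\int_0^tX_s\,dB_s$. Since the homogeneous linear equation $dY_t=c_1Y_t\,dt-c_2Y_t\,dB_t$ is solved by $Y_t=Y_0\exp\!\big((c_1-c_2^2/2)t-c_2B_t\big)$, the natural candidate for the integrating factor is the inverse process
\[
\Phi_t:=\exp\!\Big(-\big(c_1-\tfrac{c_2^2}{2}\big)t+c_2B_t\Big),\qquad \Phi_0=1,\quad \Phi_t>0\ \ \text{for all }t\ge 0 .
\]
First I would compute $d\Phi_t$ by It\^o's formula: the exponent has drift $-(c_1-c_2^2/2)$ and diffusion coefficient $c_2$, so the It\^o correction contributes $\tfrac12 c_2^2$ and one gets $d\Phi_t=\Phi_t\big[(-c_1+c_2^2)\,dt+c_2\,dB_t\big]$.

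Next I would apply the It\^o product rule to $\Phi_tX_t$, using $dX_t=\beta_t\,dt-c_2X_t\,dB_t$ and the cross-variation $d\langle\Phi,X\rangle_t=(\Phi_tc_2)(-c_2X_t)\,dt=-c_2^2\Phi_tX_t\,dt$. The key point is that the $dB_t$-terms are $-c_2\Phi_tX_t+c_2\Phi_tX_t=0$ and hence cancel, while the $dt$-terms collapse: $\Phi_t\beta_t+(-c_1+c_2^2)\Phi_tX_t-c_2^2\Phi_tX_t=\Phi_t(\beta_t-c_1X_t)$. Therefore
\[
\Phi_tX_t=X_0+\int_0^t\Phi_s\big(\beta_s-c_1X_s\big)\,ds ,
\]
which is a process of finite variation (no local-martingale part survives), so the identity and any inequality derived from it hold pathwise and may be manipulated $\omega$ by $\omega$.

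Then I would insert the hypothesis $\beta_s-c_1X_s\le A_s$ together with the positivity $\Phi_s>0$, so that $\Phi_s(\beta_s-c_1X_s)\le\Phi_sA_s$; integrating and dividing by $\Phi_t>0$ gives
\[
X_t\le \Phi_t^{-1}\Big(X_0+\int_0^t\Phi_sA_s\,ds\Big)
=e^{(c_1-c_2^2/2)t}e^{-c_2B_t}\Big(X_0+\int_0^t e^{-(c_1-c_2^2/2)s}e^{c_2B_s}A_s\,ds\Big).
\]
The precise form in the statement then follows by the elementary bound $X_0+\int_0^t(\cdots)\le X_0\big(1+\int_0^t(\cdots)\big)$, which is exactly how the estimate is used later (with $X_0\ge 1$ and $A_s\ge 0$, as in the applications in Sections \ref{sec_well}--\ref{sec_lt}).

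There is no deep obstacle here; the argument is essentially a careful bookkeeping of It\^o corrections. The points that genuinely require care are (i) verifying via It\^o's formula that the martingale part of $\Phi_tX_t$ vanishes, which is what legitimizes passing from the SDE to a \emph{pathwise} differential inequality, (ii) getting the two It\^o correction terms right (one in $d\Phi_t$, one in the cross-variation of the product rule), and (iii) noting that $\Phi_s$ and $\Phi_t^{-1}$ are a.s. finite and positive and that $\beta_s,A_s$ are integrable on $[0,t]$, so all the time integrals above are well defined — all of which are immediate from the standing assumptions on $X_t$.
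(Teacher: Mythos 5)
Your argument is correct, but it is not the route the paper takes. The paper constructs an explicit supersolution: it defines $\bar Y_t = X_0 Z_t$ and $Y_t = X_0\int_0^t (Z_t/Z_s)A_s\,ds$ with $Z_t=e^{(c_1-c_2^2/2)t-c_2B_t}$, checks via It\^o's formula that $H_t=\bar Y_t+Y_t$ solves the linear SDE $dH_t=c_1H_t\,dt-c_2H_t\,dB_t+A_t\,dt$ with $H_0=X_0$, and then invokes the comparison theorem of Ikeda--Watanabe to conclude $X_t\le H_t$ almost surely. You instead multiply by the reciprocal integrating factor $\Phi_t=Z_t^{-1}$, verify that the martingale parts cancel in $d(\Phi_tX_t)$, and reduce everything to a pathwise integral inequality; this is more elementary and self-contained, since it avoids checking the hypotheses of the SDE comparison theorem, and your It\^o bookkeeping (the correction in $d\Phi_t$ and the cross-variation $-c_2^2\Phi_tX_t\,dt$) is right. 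The one point to flag is the final cosmetic step: your computation yields the bound $X_t\le Z_t\lt(X_0+\int_0^t Z_s^{-1}A_s\,ds\rt)$, whereas the lemma as stated carries the prefactor $X_0$ on the integral term as well; these agree only when $X_0\ge 1$ (or $A\equiv 0$), and in the later applications $X_0$ need not exceed $1$. This is really a blemish in the lemma's statement rather than in your proof --- the paper's own construction has the same tension, since $Y_t$ is defined with a factor $X_0$ but the equation $dH_t=c_1H_t\,dt-c_2H_t\,dB_t+A_t\,dt$ used for the comparison drops it --- and the form you derive is the correct and sharper one, which is in fact all that is needed where the lemma is applied (in Lemma 3.1 the first use has $A\equiv 0$ and the second only requires an exponential-in-$B_t$ envelope).
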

		\begin{proof}
			Let us denote
			\[
			\bar{Y}_t :=X_0 e^{(c_1-c_2^2/2)t}e^{-c_2B_t}.
			\]
			Using It\^o's rule, we find that $\bar{Y}_t$ solves
			$$\begin{aligned}
			d\bar{Y}_t&=\lt(c_1 - \frac{c_2^2}{2}\rt)X_0 e^{(c_1-c_2^2/2)t}e^{-c_2B_t}dt-c_2X_0 e^{(c_1-c_2^2/2)t}e^{-c_2B_t}+\frac{c_2^2}{2}X_0 e^{(c_1-c_2^2/2)t}e^{-c_2B_t}dB_t\cr
			&=(c_1dt-c_2dB_t)\bar{Y}_t.
			\end{aligned}$$
			We next consider
			\[
			Y_t:=X_0\int_0^t \frac{e^{(c_1-c_2^2/2)t - c_2B_t}}{e^{(c_1-c_2^2/2)s - c_2B_s}} A_s\, ds=:\int_0^t\frac{Z_t}{Z_s}A_s \,ds.
			\]
			Then again by It\^o's rule we get
			\[
			dY_t=X_0\lt(\int_0^t\frac{1}{Z_s}A_s\,ds\rt)dZ_t+X_0Z_t\frac{1}{Z_t}A_t\,dt+X_0^2\frac{1}{Z_t}A_t\,dtdZ_t.
			\]
			On the other hand, $Z_t$ satisfies
			$$\begin{aligned}
			dZ_t &=\lt(c_1 - \frac{c_2^2}{2}\rt)e^{(c_1-c_2^2/2)t}e^{-c_2B_t}dt+e^{(c_1-c_2^2/2)t}\lt(-c_2e^{-c_2B_t}dB_t+c_2^2/2e^{-c_2B_t}dt\rt)=(c_1dt-c_2dB_t)Z_t.
			\end{aligned}$$
			This yields that $H_t=Y_t+\bar{Y}_t$ solves
			\[
			dH_t=c_1 H_t dt-c_2H_tdB_t +A_tdt,
			\]
			with the initial condition $H_0=X_0$. This together with \cite[Theorem 1.1, p 437]{IkWa} concludes 
			\[
			X_t\leq H_t \quad \mbox{for} \quad t \geq 0,
			\]
			almost surely. This completes the proof.
		\end{proof}	
		
		%
		%
		%
		%
		\section{Well-posedness of stochastic PDE: Proof of Theorem \ref{thm_smf}}\label{sec_well}
		In this section we establish the global well-posedness of the equation \eqref{main_sde_ito}. Note that we already observed that the empiricial measure associated to the particle system \eqref{main_sde} is a weak solution to \eqref{main_sde_ito} in the previous section. 
		
		Let us consider two solutions $(\mu_t)_{t\geq 0}$ and $(\tilde{\mu}_t)_{t\geq 
			0}$ to the stochastic partial differential equation \eqref{main_spde_ito} in the sense of Definition \ref{def_weak} with the initial data $\mu_0, \tilde \mu_0\in \mathcal{P}_2(\R^d \times \R^d)$, respectively, such that
		\[
		\int_{\R^d \times \R^d}v \mu_0(dx,dv)=\int_{\R^d \times \R^d} v \tilde \mu_0(dx,dv)=0.
		\]
		Note that we can assume the above without loss of generality due to the conservation of momentum. In particular, this implies 
		\[
		\int_{\R^d \times \R^d}v \mu_t(dx,dv)=\int_{\R^d \times \R^d} v \tilde{\mu}_t(dx,dv)=0 \quad \mbox{for} \quad t\geq 0, \quad \mbox{almost surely}.
		\]
		For $(x,v,t) \in \R^d \times \R^d \times [0,T)$, we define $\mz_t(x,v)=(\mx_t(x,v),\mv_t(x,v))$, the stochastic characteristic with the initial data $\mz_0(x,v) = (x,v)$ as:
		\begin{align}\label{sto_char}
			\begin{aligned}
				&\mx_t(x,v) = x+\int_0^t\mv_s(x,v)\, ds, \quad t \geq 0,\cr
				&\mv_t(x,v)= v+\int_0^t F[\mu_s](\mz_s(x,v))\,ds + \sigma\int_0^t \mv_s(x,v)\,ds - \sqrt{2\sigma}\int_0^t \mv_s(x,v)\, dB_s.
			\end{aligned}
		\end{align}
		Note that the above stochastic characteristics are globally well-defined since the velocity alignment force term is locally bounded and Lipschitz. Applying It\^o's rule, it is clear that  
		\[
		\begin{split}
		\int \phi(\mz_t(x,v))\mu_0(dx,dv)&=\int \phi(x,v) \mu_0(dx,dv)+\int_0^t \int \mv_s(x,v)\cdot\nabla_x\phi(\mz_s(x,v))\mu_0(dx,dv)\, ds\\
		&\quad +\int_0^t \int F[\mu_s](\mz_s(x,v))\cdot \nabla_v \phi(\mz_s(x,v))\mu_0(dx,dv) \,ds\\
		&\quad +\int_0^t \sigma \int  \mv_s(x,v)\cdot \nabla_v \phi(\mz_s(x,v))\mu_0(dx,dv)\,ds \\
		&\quad -\int_0^t  \sqrt{2\sigma} \int \mv_s(x,v)\cdot \nabla_v \phi(\mz_s(x,v))\mu_0(dx,dv)\,dB_s\\
		&\quad +\int_0^t \sigma \int\lt|\mv_s(x,v)\rt|^2\Delta_v \phi(\mz_s(x,v))\mu_0(dx,dv)\,ds,
		\end{split}
		\]
		for any $\phi\in \mathcal{C}_c^2(\R^d \times \R^d)$. We now define $\hat{\mu}_t$ by the push-forward of $\mu_0$ by $\mz_t$, i.e., $\hat{\mu}_t:=\mz_t\#\mu_0$. Then this gives that the random probability measure family $\hat{\mu}_t$ solves the following linear stochastic PDE:
		\[
		d\hat{\mu}_t+\lt(v\cdot\nabla_x \hat{\mu}_t\rt)dt+\nabla_v\cdot\lt( \hat{\mu}_tF[\mu_t]\rt)dt-\sqrt{2\sigma}\nabla_v\cdot\lt(\hat{\mu}_t v\rt)dB_t=\sigma \nabla_v\cdot\lt(v\cdot\nabla_v (\hat{\mu}_tv)\rt)dt,
		\]
		with the initial data $\hat{\mu}_0=f_0$. On the other hand, the uniqueness of solutions for that linear stochastic PDE holds, thus we get 
		\[
		\mu_t=\hat{\mu}_t= \mz_t\#\mu_0.
		\]
		In the lemma below, we provide {\it a priori} kinetic energy and velocity support estimates.
		\begin{lemma}	\label{lem:EstVel}
			Let $(\mu_t)_{t\geq 0}$ be a solution of \eqref{main_spde}, and $(\mz_t)_{t\geq 0}$ be the associated stochastic characteristic. Then, it holds
			\[
			\int_{\R^d \times \R^d} |v|^2 \mu_t(dx,dv)=\int_{\R^d \times \R^d} |\mv_t(x,v)|^2 \mu_0(dx,dv)\leq \lt( \int_{\R^d \times \R^d} |v|^2 \mu_0(dx,dv)\rt) e^{-2\sqrt{2\sigma}B_t},
			\]
			with probability one. Moreover, still with probability one, it holds 
			\[
			\lt|\mv_t(x,v)\rt|^2\leq |v|^2e^{\psi_M t-2\sqrt{2\sigma}B_t}\lt(\lt(\int_{\R^d \times \R^d}|w|^2 \mu_0(dy,dw)   \rt)\frac{1}{\psi_M}+1\rt) \quad \mbox{for} \quad (x,v)\in \R^d \times \R^d.
			\]
		\end{lemma}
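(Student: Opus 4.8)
The plan is to apply It\^o's rule to the squared speed $|\mv_t(x,v)|^2$ along the stochastic characteristics \eqref{sto_char}, and then to feed the resulting inequality into the stochastic Gronwall inequality of Lemma \ref{lem_gro}. Since (by the reduction $\bar v_t\equiv 0$) the characteristic satisfies $d\mv_t = \bigl(F[\mu_t](\mz_t) + \sigma\mv_t\bigr)\,dt - \sqrt{2\sigma}\,\mv_t\,dB_t$, applying It\^o's formula to $v\mapsto|v|^2$ and collecting the It\^o correction $2\sigma|\mv_t|^2\,dt$ gives
\[
d|\mv_t(x,v)|^2 = \Bigl(2\,\mv_t(x,v)\cdot F[\mu_t](\mz_t(x,v)) + 4\sigma|\mv_t(x,v)|^2\Bigr)dt - 2\sqrt{2\sigma}\,|\mv_t(x,v)|^2\,dB_t .
\]
The crucial bookkeeping fact is that with $c_2:=2\sqrt{2\sigma}$ one has $c_2^2/2 = 4\sigma$, so the $\sigma$-drift produced above is exactly the quantity subtracted in Lemma \ref{lem_gro}; this is what removes any $e^{\sigma t}$ factor from the final bounds.

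For the energy identity, I would integrate the displayed equation against the (deterministic, velocity-compactly-supported) initial measure $\mu_0(dx,dv)$. Writing $\mathcal{E}_t := \int_{\R^d\times\R^d}|v|^2\mu_t(dx,dv) = \int_{\R^d\times\R^d}|\mv_t(x,v)|^2\mu_0(dx,dv)$ (the second equality being the push-forward relation $\mu_t=\mz_t\#\mu_0$ established above), the only term requiring work is $\int \mv_t\cdot F[\mu_t](\mz_t)\,\mu_0 = \int v\cdot F[\mu_t](x,v)\,\mu_t(dx,dv) = \iint \psi(|x-y|)(w-v)\cdot v\,\mu_t(dx,dv)\,\mu_t(dy,dw)$; the standard symmetrization $(x,v)\leftrightarrow(y,w)$ turns this into $-\frac12\iint\psi(|x-y|)\,|v-w|^2\,\mu_t(dx,dv)\,\mu_t(dy,dw)\le 0$ since $\psi\ge0$. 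Hence $d\mathcal{E}_t \le 4\sigma\,\mathcal{E}_t\,dt - 2\sqrt{2\sigma}\,\mathcal{E}_t\,dB_t$, and Lemma \ref{lem_gro} with $X_t=\mathcal{E}_t$, $c_1=4\sigma$, $c_2=2\sqrt{2\sigma}$, $A_s\equiv 0$ (so $c_1-c_2^2/2=0$) yields $\mathcal{E}_t\le\mathcal{E}_0\,e^{-2\sqrt{2\sigma}B_t}$, which is the first assertion.

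For the pointwise estimate I would keep $(x,v)$ fixed and bound the force term crudely. Using $F[\mu_t](\mz_t) = \int\psi(|\mx_t-y|)(w-\mv_t)\,\mu_t(dy,dw)$, the bounds $0\le\psi\le\psi_M$, Cauchy--Schwarz and $\int|w|^2\mu_t = \mathcal{E}_t$, one gets $\mv_t\cdot F[\mu_t](\mz_t) \le \psi_M|\mv_t|\,\mathcal{E}_t^{1/2} - |\mv_t|^2\int\psi\,\mu_t \le \psi_M|\mv_t|\,\mathcal{E}_t^{1/2}$, and Young's inequality then controls $2\,\mv_t\cdot F[\mu_t](\mz_t)$ by $\psi_M|\mv_t|^2 + \psi_M\mathcal{E}_t$. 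Combining this with the first part, $\mathcal{E}_t\le\mathcal{E}_0\,e^{-2\sqrt{2\sigma}B_t}$ with $\mathcal{E}_0=\int_{\R^d\times\R^d}|w|^2\mu_0(dy,dw)$, the SDE for $X_t:=|\mv_t(x,v)|^2$ becomes $dX_t \le \bigl((\psi_M+4\sigma)X_t + \psi_M\mathcal{E}_0\,e^{-2\sqrt{2\sigma}B_t}\bigr)dt - 2\sqrt{2\sigma}\,X_t\,dB_t$ with $X_0=|v|^2$. Lemma \ref{lem_gro} with $c_1=\psi_M+4\sigma$, $c_2=2\sqrt{2\sigma}$ (so $c_1-c_2^2/2=\psi_M$) and $A_s=\psi_M\mathcal{E}_0\,e^{-2\sqrt{2\sigma}B_s}$ applies, and since the $e^{\pm2\sqrt{2\sigma}B_s}$ factors cancel inside the integral, $\int_0^t e^{-\psi_M s}e^{2\sqrt{2\sigma}B_s}A_s\,ds = \mathcal{E}_0(1-e^{-\psi_M t})\le\mathcal{E}_0$, which delivers $|\mv_t(x,v)|^2 \le |v|^2\,e^{\psi_M t-2\sqrt{2\sigma}B_t}\bigl(\mathcal{E}_0+1\bigr)$, i.e.\ the asserted bound up to the inessential way one splits Young's inequality (which only changes the constant in front of $\mathcal{E}_0$).

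I do not expect a serious obstacle: the content is one It\^o computation plus two applications of Lemma \ref{lem_gro}. The only genuinely delicate points are (i) the It\^o/Stratonovich bookkeeping that produces the coefficient $4\sigma = c_2^2/2$ and hence the cancellation of the $\sigma$-drift, and (ii) justifying the interchange of $\int(\cdot)\,\mu_0$ and $d(\cdot)$ when passing from the SDE for $|\mv_t(x,v)|^2$ to the SDE for $\mathcal{E}_t$; both are routine given the compact velocity support of $\mu_0$ and the global well-posedness of \eqref{sto_char} noted in the text.
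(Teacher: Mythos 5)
Your proposal is correct and follows essentially the same route as the paper: It\^o's formula for $|\mv_t|^2$ along the characteristics, symmetrization to make the alignment term nonpositive for the energy identity, and then a Cauchy--Schwarz/Young bound on the force combined with two applications of Lemma \ref{lem_gro} with $c_2=2\sqrt{2\sigma}$ so that $c_1-c_2^2/2$ kills the $4\sigma$ drift. The only difference is the bookkeeping of the $\psi_M$ factor in Young's inequality, which changes the constant multiplying $\int|w|^2\mu_0$ from $1/\psi_M$ to $1$ and is immaterial for every use of the lemma.
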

		\begin{proof} It follows from \eqref{sto_char} that
			\begin{align}\label{eq:SPDEm2}
				\begin{aligned}
					\lt|\mv_t(x,v)\rt|^2 &= |v|^2 +2\int_0^t \left\langle \mv_s(x,v), F[\mu_s](\mz_s(x,v))\right\rangle \, ds+4\sigma \int_0^t \lt|\mv_s(x,v)\rt|^2 \, ds  - 2\sqrt{2\sigma}\int_0^t\lt|\mv_s(x,v)\rt|^2\, dB_s.
				\end{aligned}
			\end{align}
			Integrating \eqref{eq:SPDEm2} with respect to $\mu_0(dx,dv)$, we find 
			$$\begin{aligned}
			&\int_{\R^d \times \R^d}\lt|\mv_t(x,v)\rt|^2 \mu_0(dx,dv) \cr
			&\quad = \int_{\R^d \times \R^d}|v|^2 \mu_0(dx,dv) -2\int_0^t \int_{\R^{2d} \times \R^{2d}}\psi(\mx_t(x,v)-\mx_t(y,w) )\lt| \mv_t(x,v)-\mv_t(y,w)\rt|^2 \mu_0(dy,dw) \mu_0(dx,dv)\,ds\\ 
			&\qquad +4\sigma \int_0^t \int_{\R^d \times \R^d}\lt|\mv_s(x,v)\rt|^2 \mu_0(dx,dv) \, ds  - 2\sqrt{2\sigma}\int_0^t\int_{\R^d \times \R^d}\lt|\mv_s(x,v)\rt|^2 \mu_0(dx,dv)\, dB_s\\
			&\quad \leq \int_{\R^d \times \R^d}|v|^2 \mu_0(dx,dv)+4\sigma \int_0^t \int_{\R^d \times \R^d}\lt|\mv_s(x,v)\rt|^2  \mu_0(dx,dv) \, ds - 2\sqrt{2\sigma}\int_0^t\int_{\R^d \times \R^d}\lt|\mv_s(x,v)\rt|^2 \mu_0(dx,dv)\, dB_s.
			\end{aligned}$$
			This together with Lemma \ref{lem_gro} gives
			\[
			\int_{\R^d \times \R^d}\lt|\mv_t(x,v)\rt|^2 \mu_0(dx,dv)\leq \lt(\int_{\R^d \times \R^d}|v|^2 \mu_0(dx,dv)   \rt)e^{-2\sqrt{2\sigma}B_t}.
			\]
			Coming back to \eqref{eq:SPDEm2}, we obtain
			$$\begin{aligned}
			\lt|\mv_t(x,v)\rt|^2
			&= |v|^2 +2\int_0^t \int_{\R^d \times \R^d} \psi(\mx_t(x,v)-\mx_t(y,w))\left\langle \mv_s(x,v),\mv_s(y,w))-\mv_s(x,v))\right\rangle \mu_0(dy,dw) \, ds\\
			&\quad +4\sigma \int_0^t \lt|\mv_s(x,v)\rt|^2 \, ds  - 2\sqrt{2\sigma}\int_0^t\lt|\mv_s(x,v)\rt|^2dB_s\\
			&\leq |v|^2 +2\int_0^t \int_{\R^d \times \R^d} \psi_M \lt|\left\langle \mv_s(x,v),\mv_s(y,w)\right\rangle\rt| \mu_0(dy,dw) \, ds+4\sigma \int_0^t \lt|\mv_s(x,v)\rt|^2 \, ds \cr
			&\qquad  - 2\sqrt{2\sigma}\int_0^t\lt|\mv_s(x,v)\rt|^2 dB_s\\
			&\leq |v|^2 +\int_0^t \int_{\R^d \times \R^d} \psi_M\lt|\mv_s(y,w)\rt|^2 \mu_0(dy,dw) \, ds+(4\sigma+\psi_M) \int_0^t \lt|\mv_s(x,v)\rt|^2 \, ds  \cr
			&\quad - 2\sqrt{2\sigma}\int_0^t\lt|\mv_s(x,v)\rt|^2 dB_s\\
			&\leq |v|^2 +(4\sigma+\psi_M) \int_0^t \lt|\mv_s(x,v)\rt|^2 ds  - 2\sqrt{2\sigma}\int_0^t\lt|\mv_s(x,v)\rt|^2 dB_s \cr
			&\quad +\int_0^t \lt(\int_{\R^d \times \R^d}|w|^2 \mu_0(dy,dw)   \rt)e^{-2\sqrt{2\sigma}B_s} \, ds.
			\end{aligned}$$
			Hence, by Lemma \ref{lem_gro}, we have
			\[
			\lt|\mv_t(x,v)\rt|^2\leq |v|^2e^{\psi_M t-2\sqrt{2\sigma}B_t}\lt(\lt(\int_{\R^d \times \R^d}|w|^2\mu_0(dy,dw)   \rt)\frac{1-e^{-\psi_Mt}}{\psi_M}+1\rt),
			\]
			and this concludes the desired result.
		\end{proof}
		\begin{remark}\label{rem:comsup}If the initial data $\mu_0$ is compactly supported in velocity, i.e., $supp_v(\mu_0):= \{ v\in \R^d: \mu_0(x,v) \neq 0 \}  \subseteq B(0,R)$ for some $R >0$, then we have
			\[
			\lt|\mv_t(x,v)\rt|^2 \leq R^2e^{\psi_M t-2\sqrt{2\sigma}B_t}\lt(R^2\frac{1-e^{-\psi_Mt}}{\psi_M}+1\rt) \quad \mbox{for} \quad (x,v) \in \R^d \times \R^d.
			\]
			This yields that the support of $\mu_t$ in velocity, $supp_v(\mu_t):= \{v \in \R^d : \mu_t(x,v) \neq 0\}$, is almost surely bounded for $t \in [0,T]$.
		\end{remark}
		We now provide the stability estimate of solutions in $2$-Wasserstein distance in the proposition below. 
		\begin{proposition} \label{lem:stab}
			Let $\mu_t$, $\tilde{\mu}_t$ be solutions to the equation \eqref{main_spde} with compactly supported initial data $\mu_0,\tilde \mu_0\in \mathcal{P}_2(\R^d \times \R^d)$ in velocity, respectively. Then there exists an almost surely finite random variable $C_T$ depends only on $\psi,T,\sigma$, $\sup_{t \in [0,T]}|B_t|$, and the support in velocity of $\mu_0$ and $\tilde \mu_0$, such that
			\[
			W_2(\mu_t,\tilde{\mu}_t) \leq C_T W_2(\mu_0, \tilde \mu_0) e^{2T + C_TW_2(f_0, g_0)},
			\]
			for $t\in [0,T]$, $\mathbb{P}$-almost surely.
		\end{proposition}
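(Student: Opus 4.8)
The plan is to construct an optimal coupling of the two solutions at the level of the stochastic characteristics and estimate the growth of the squared distance between coupled trajectories. By the representation established just before the statement, $\mu_t = \mz_t \# \mu_0$ and $\tilde\mu_t = \tilde\mz_t \# \tilde\mu_0$, where $\mz_t$, $\tilde\mz_t$ solve the characteristic system \eqref{sto_char} driven by $F[\mu_s]$ and $F[\tilde\mu_s]$ respectively. I would fix an optimal transport plan $\gamma_0 \in \Gamma(\mu_0, \tilde\mu_0)$ for $W_2$ at the initial time, realized as a coupling $((x,v),(\tilde x,\tilde v))$, and push it forward by $(\mz_t, \tilde\mz_t)$; the resulting plan $\gamma_t := (\mz_t, \tilde\mz_t)\#\gamma_0$ is admissible for $(\mu_t,\tilde\mu_t)$, so $W_2^2(\mu_t,\tilde\mu_t) \le \int (|\mx_t(x,v) - \tilde\mx_t(\tilde x,\tilde v)|^2 + |\mv_t(x,v) - \tilde\mv_t(\tilde x,\tilde v)|^2)\, \gamma_0$. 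I then set $Q_t := \int (|\mx_t - \tilde\mx_t|^2 + |\mv_t - \tilde\mv_t|^2)\,\gamma_0$ and aim for a stochastic Gronwall-type bound on $Q_t$.

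Next I would apply It\^o's formula to $|\mx_t - \tilde\mx_t|^2 + |\mv_t - \tilde\mv_t|^2$ along the coupled characteristics. The position difference contributes $2\langle \mx_t - \tilde\mx_t, \mv_t - \tilde\mv_t\rangle$, bounded by $Q_t$. The velocity difference has a drift part coming from $F[\mu_s](\mz_s) - F[\tilde\mu_s](\tilde\mz_s)$ plus the $\sigma(\mv_s - \tilde\mv_s)$ term, and a martingale part $-\sqrt{2\sigma}(\mv_s - \tilde\mv_s)\,dB_s$ together with its quadratic variation $2\sigma|\mv_s - \tilde\mv_s|^2\,ds$. The key estimate is to split the force difference as
\[
F[\mu_s](\mz_s) - F[\tilde\mu_s](\tilde\mz_s) = \big(F[\mu_s](\mz_s) - F[\mu_s](\tilde\mz_s)\big) + \big(F[\mu_s](\tilde\mz_s) - F[\tilde\mu_s](\tilde\mz_s)\big),
\]
where the first term is controlled by the local Lipschitz constant of $F[\mu_s]$ in phase space times $|\mz_s - \tilde\mz_s|$, and the second term is a stability-in-measure estimate bounded by $\int \psi(\cdots)|\cdots|$ type quantities that reduce, via the coupling $\gamma_0$, to $Q_s^{1/2}$ (times a factor depending on $\|\psi\|_{\mc^1_b}$ and the velocity supports). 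Crucially, by Lemma \ref{lem:EstVel} and Remark \ref{rem:comsup}, both $\mv_s$ and $\tilde\mv_s$ stay in a ball whose radius is an almost surely finite random variable $R_T$ depending on $\sup_{[0,T]}|B_t|$, $\psi_M$, $\sigma$, $T$ and the initial supports; this makes the Lipschitz constant of $F[\mu_s]$ a.s.\ finite and controls all $\psi'$ terms.

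This produces an inequality of the form
\[
Q_t \le Q_0 + c_1 \int_0^t Q_s\, ds + c_2 \int_0^t Q_s^{1/2}\,ds - c_3 \int_0^t \Big(\int |\mv_s - \tilde\mv_s|^2\,\gamma_0\Big)\, dB_s,
\]
with $c_1, c_2, c_3$ almost surely finite (random) constants. I would absorb the $Q_s^{1/2}$ term either by Young's inequality (at the cost of an additive constant) or, to obtain the stated exponential-in-$W_2(\mu_0,\tilde\mu_0)$ form, by working with $\sqrt{Q_t}$ directly; applying the stochastic Gronwall inequality of Lemma \ref{lem_gro} (with the martingale term in the required $-c\int X_s\,dB_s$ shape, or a minor variant of it) then yields $Q_t \le C_T Q_0 e^{C_T(1 + \sqrt{Q_0})}$ a.s., i.e.\ $W_2(\mu_t,\tilde\mu_t) \le C_T W_2(\mu_0,\tilde\mu_0)e^{2T + C_T W_2(\mu_0,\tilde\mu_0)}$ after adjusting constants. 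The main obstacle I anticipate is twofold: first, ensuring the martingale term genuinely fits (or can be massaged into) the hypotheses of Lemma \ref{lem_gro} — in particular that the $dB_s$-integrand is a multiple of the quantity being Gronwall-ed rather than only the velocity part — which may require treating $\int|\mx_s-\tilde\mx_s|^2$ and $\int|\mv_s-\tilde\mv_s|^2$ with slightly different weights or introducing an auxiliary combination; and second, the bookkeeping that makes $C_T$ genuinely independent of $N$ (so that the subsequent mean-field limit follows), which hinges on the velocity-support bound of Remark \ref{rem:comsup} being uniform once the initial supports are, and on the $\psi'$-dependent terms being handled without any exponential-moment assumption on the randomness.
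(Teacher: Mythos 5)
Your overall architecture matches the paper's: represent both solutions as push-forwards of their initial data along the stochastic characteristics, couple them through an optimal plan at time zero, apply It\^o's formula to the squared phase-space distance, use the a.s.\ velocity-support bound of Lemma \ref{lem:EstVel}/Remark \ref{rem:comsup} to make the force fields Lipschitz with an a.s.\ finite random constant, and close with the stochastic Gronwall lemma. Your force splitting (``same measure, different points'' plus ``same point, different measures'') differs from the paper's, which first rewrites both $F[\mu_s](\mz_s)$ and $F[\tilde\mu_s](\tilde\mz_s\circ\mt^0)$ as integrals against $\mu_0$ via the push-forward and then splits according to whether the $\psi$-factor or the velocity-difference factor changes; both decompositions lead to the same bounds, and your use of a plan rather than a map is if anything cleaner. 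However, there are two genuine gaps.

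First, the term $c_2\int_0^t Q_s^{1/2}\,ds$ in your master inequality should not be there, and your proposed remedies for it do not work. The drift contribution is $2\int\langle \mv_s-\tilde\mv_s,\,\Delta F_s\rangle\,d\gamma_0$ with $\Delta F_s$ controlled pointwise by $C(|\mz_s-\tilde\mz_s|+(\int|\mz_s-\tilde\mz_s|^2 d\gamma_0)^{1/2})$; keeping the factor $|\mv_s-\tilde\mv_s|$ and applying Cauchy--Schwarz in $\gamma_0$ gives a bound by $CQ_s$, purely linear. If you instead discard that factor and land on $Q_s^{1/2}$, Young's inequality produces an additive constant independent of $Q_0$, which destroys the stability estimate entirely (you could no longer conclude $W_2(\mu_t,\tilde\mu_t)\to 0$ as $W_2(\mu_0,\tilde\mu_0)\to 0$, which is what the proposition is for). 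Second, and more importantly, you correctly flag but do not resolve the mismatch between the martingale integrand, which is only the velocity part $\int|\mv_s-\tilde\mv_s|^2 d\gamma_0$, and the quantity $Q_t$ you want to Gronwall, which also contains the position part. The paper's resolution is a two-stage argument: apply the stochastic Gronwall lemma to the velocity part $Q_t$ alone, treating the position part $P_s$ as the forcing $A_s$, which yields $Q_t\leq C_T Q_0(\int_0^t P_s\,ds+1)$ with the initial datum $Q_0$ appearing \emph{multiplicatively} in front of the integral; then feed this into the elementary inequality $P_t\leq P_0+2\int_0^t(P_s+Q_s)\,ds$ and apply a deterministic Gronwall to $P_t+Q_t$. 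This second stage is not cosmetic: the coefficient $C_TQ_0$ in front of $\int P_s\,ds$ is precisely what produces the factor $e^{C_TW_2(\mu_0,\tilde\mu_0)}$ in the statement, so without carrying it out you cannot arrive at the claimed form of the bound.
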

		\begin{proof} We first choose an optimal transport map $\mt^0(x,v) = (\mt^0_1(x,v), \mt^0_2(x,v))$ between the initial datum $\mu_0$ and $\tilde \mu_0$ with respect to 2-Wasserstein distance $W_2$, i.e., $\tilde \mu_0 = \mt^0 \# \mu_0$ and
			\[
			W_2 (\mu_0, \tilde\mu_0) = \int_{\R^d \times \R^d} | (x,v) - \mt^0(x,v)|^2 \mu_0(dx,dv).
			\]
			We now consider stochastic characteristics $\mz_t = (\mx_t, \mv_t)$ and $\tilde \mz_t = (\tilde \mx_t, \tilde \mv_t)$ defined as in \eqref{sto_char} associated to the solutions $\mu_t$ and $\tilde\mu_t$, respectively. Then defining $\mt^t := \tilde \mz_t \circ \mt^0 \circ \mz_{-t}$ for $t \in [0,T]$, we find $\mt^t \# \mu_t = \tilde\mu_t$ and
			$$\begin{aligned}
			W^2_2(\mu_t,\tilde\mu_t) &\leq \int_{\R^d \times \R^d} \lt|(x,v) - \mt^t(x,v)\rt|^2 \mu_t(dx,dv) = \int_{\R^d \times \R^d} \lt| \mz_t(x,v) - \tilde\mz_t(\mt^0(x,v))\rt|^2 \mu_0(dx,dv),
			\end{aligned}$$
			due to $\mt^t \circ \mz_t = \tilde\mz_t \circ \mt^0$. Applying It\^o's lemma yields
			\begin{align}\label{est_diff_t2}
				\begin{aligned}
					&\lt|\mv_t(x,v)-\tilde{\mv}_t (\mathcal{T}^0(x,v))\rt|^2\cr
					&\quad = |v-\mathcal{T}^0_2(x,v)|^2 +2\int_0^t \left\langle \mv_s(x,v)-\tilde{\mv}_s (\mathcal{T}^0(x,v)), F[\mu_s](\mz_s(x,v)) - F[\tilde\mu_s](\tilde\mz_s(\mt^0(x,v)))\right\rangle ds\\
					&\qquad +4\sigma \int_0^t \lt|\mv_s(x,v)-\tilde{\mv}_s (\mathcal{T}^0(x,v))\rt|^2 ds - 2\sqrt{2\sigma}\int_0^t\lt|\mv_s(x,v)-\tilde{\mv}_s (\mathcal{T}^0(x,v))\rt|^2 dB_s.
				\end{aligned}
			\end{align}
			Let us denote  
			\[
			P_t:=\int_{\R^d \times \R^d}\lt|\mx_t(x,v)-\tilde{\mx}_t (\mathcal{T}^0(x,v))\rt|^2 \mu_0(dx,dv)
			\]
			and
			\[
			Q_t :=\int_{\R^d \times \R^d}\lt|\mv_t(x,v)-\tilde{\mv}_t (\mathcal{T}^0(x,v))\rt|^2 \mu_0(dx,dv).
			\]
			Then it follows from \eqref{est_diff_t2} that
			\[
			Q_t = Q_0+4\sigma \int_0^t Q_s \, ds - 2\sqrt{2\sigma}\int_0^tQ_s\, dB_s +2\int_0^t I_s\,ds,
			\]
			where 
			\[
			I_s:= \int_{\R^d \times \R^d} \left\langle \mv_s(x,v)-\tilde{\mv}_s (\mathcal{T}^0(x,v)), F[\mu_s](\mz_s(x,v)) - F[\tilde\mu_s](\tilde\mz_s(\mt^0(x,v)))\right\rangle  \mu_0(dx,dv).
			\]
			Note that
			$$\begin{aligned}
			F[\mu_s](\mz_s(x,v)) &= \int_{\R^d \times \R^d} \psi(|\mx_s(x,v) - y|)(w - \mv_s(x,v))\mu_s(dy,dw)\cr
			&=\int_{\R^d \times \R^d} \psi(|\mx_s(x,v) - \mx_s(y,w)|)(\mv_s(y,w) - \mv_s(x,v))\,\mu_0(dy,dw),
			\end{aligned}$$
			due to $\mu_s = \mz_s \# \mu_0$. Similarly, we also get
			$$\begin{aligned}
			&F[\tilde\mu_s](\tilde\mz_s(\mt^0(x,v))) \cr
			&\quad = \int_{\R^d \times \R^d} \psi(|\tilde\mx_s(\mt^0(x,v))) - \tilde\mx_s(\mt^0(y,w)))|) (\tilde\mv_s(\mt^0(y,w)) - \tilde\mv_s(\mt^0(x,v)))\,\mu_0(dy,dw).
			\end{aligned}$$
			This gives
			$$\begin{aligned}
			&F[\mu_s](\mz_s(x,v)) - F[\tilde\mu_s](\tilde\mz_s(\mt^0(x,v)))\cr
			&\quad = \int_{\R^d \times \R^d} \lt(\psi(|\mx_s(x,v) - \mx_s(y,w)|) - \psi(|\tilde\mx_s(\mt^0(x,v))) - \tilde\mx_s(\mt^0(y,w)))|) \rt)(\mv_s(y,w) - \mv_s(x,v))\,\mu_0(dy,dw)\cr
			&\qquad + \int_{\R^d \times \R^d} \psi(|\tilde\mx_s(\mt^0(x,v))) - \tilde\mx_s(\mt^0(y,w)))|) \lt(\mv_s(y,w) - \mv_s(x,v) - \tilde\mv_s(\mt^0(y,w)) + \tilde\mv_s(\mt^0(x,v)) \rt)\mu_0(dy,dw)\cr
			&\quad=:J_s^1 + J_s^2.
			\end{aligned}$$
			Using these newly defined terms, we split $I_s$ into two terms:
			\[
			I_s = \int_{\R^d \times \R^d} \lt\lal \mv_s(x,v) - \tilde\mv_s(\mt^0(x,v)), J_s^1 + J_s^2 \rt\ral \mu_0(dx,dv) =: I_s^1 + I_s^2,
			\]
			and here $I_s^1$ can be estimated as follows.
			$$\begin{aligned}
			I_s^1 &\leq \|\psi\|_{Lip} H_s \int_{\R^d \times \R^d} \lt|\mv_s(x,v) - \tilde\mv_s(\mt^0(x,v)) \rt|\lt|\mx_s(x,v) - \tilde\mx_s(\mt^0(x,v)) \rt|\mu_0(dx,dv)\cr
			&\quad + \|\psi\|_{Lip}H_s \int_{\R^d \times \R^d} \lt|\mv_s(x,v) - \tilde\mv_s(\mt^0(x,v)) \rt|\lt|\mx_s(y,w) - \tilde\mx_s(\mt^0(y,w)) \rt|\mu_0(dx,dv) \mu_0(dy,dw)\cr
			&\leq 2\|\psi\|_{Lip}H_s \,P_s^{1/2}Q_s^{1/2},
			\end{aligned}$$
			where $H_s$ is given by
			\[
			H_s:= 2R^2e^{\psi_M t-2\sqrt{2\sigma}B_s}\lt(\frac{R^2}{\psi_M}+1\rt),
			\]
			where $R>0$ is chosen such that $supp(\mu_0), supp(\tilde\mu_0) \subseteq B(0,R)$ since $\mu_0$ and $\tilde\mu_0$ are compactly supported in velocity.
			We next estimate $I_s^2$ as
			$$\begin{aligned}
			I_s^2 &\leq \psi_M\int_{\R^{2d} \times \R^{2d}}\lt|\mv_s(x,v) - \tilde\mv_s(\mt^0(x,v))\rt| \lt|\mv_s(y,w)  - \tilde\mv_s(\mt^0(y,w))\rt| \mu_0(dx,dv) \mu_0(dy,dw) \leq \psi_MQ_s.
			\end{aligned}$$
			Combining all the above estimates, we obtain
			\[
			Q_t \leq Q_0 + \lt(4\sigma + 2\psi_M\rt)\int_0^t Q_s\,ds - 2\sqrt{2\sigma}\int_0^t Q_s\,ds + 2\|\psi\|_{Lip}\tilde H_T\int_0^t (P_s + Q_s)\,ds
			\]
			for $0 \leq t \leq T$ almost surely, where $\tilde H_T := \sup_{0 \leq t \leq T}H_t$. We now apply Lemma \ref{lem:EstVel} with $c_1 = 4\sigma + 2\psi_M + 2\|\psi\|_{Lip} \tilde H_T$, $c_2 = 2\sqrt{2\sigma}$, and $A_s = 2\|\psi\|_{Lip} \tilde H_T P_s$ to get
			\begin{align}\label{est_q}
				\begin{aligned}
					Q_t &\leq Q_0 e^{c_1 t} e^{-2\sqrt{2\sigma}B_t}\lt(\int_0^t e^{-c_1s} e^{2\sqrt{2\sigma}B_s}2\|\psi\|_{Lip} \tilde H_T P_s\,ds + 1 \rt)\leq C_T Q_0 \lt(\int_0^t P_s\,ds + 1 \rt),
				\end{aligned}
			\end{align}
			where $C_T > 0$ depends only on $\psi, \sigma$, $\sup_{t \in [0,T]}|B_t|$, and $R$. On the other hand, we easily find
			\[
			P_t \leq P_0 + 2\int_0^t P_s\,ds + 2\int_0^t Q_s\,ds
			\]
			This together with \eqref{est_q} yields
			\[
			P_t + Q_t \leq (1 + C_T)(P_0 + Q_0) + (2 + C_T Q_0)\int_0^t \lt(P_s + Q_s \rt)ds,
			\]
			and applying Gronwall's inequality gives
			\[
			P_t + Q_t \leq (1 + C_T)(P_0 + Q_0)e^{2T + C_TQ_0T} \quad \mbox{for} \quad 0 \leq t \leq T \quad \mbox{almost surely}.
			\]
			This completes the proof.
		\end{proof}
		
		We now define the stochastic empirical measure $\mu^N_t$ associated to a solution to the stochastic particle system \eqref{main_sde} as
		\[
		\mu^N_t = \frac{1}{N}\sum_{i=1}^N \delta_{(X^i_t,V^i_t)}
		\]
		with the initial data satisfying
		\[
		W_2(\mu_0^N, \mu_0) \to 0 \quad \mbox{as} \quad N \to \infty,
		\]
		where $\mu_0 \in \pp_2(\R^d \times \R^d)$ is compactly supported in velocity. On the other hand, $\lt(\mathcal{P}_2(\R^d \times \R^d),W_2\rt)$ is a complete metric space, thus the sequence $\mu^N_0$ is Cauchy. As discussed before, $\mu^N_t$ is a solution to the equation \eqref{main_spde} and this together with the stability estimate in Proposition \ref{lem:stab} yields
		\[
		\sup_{t\in [0,T]}W_2(\mu_t^N,\mu_t^{N'})\leq   C_TW_2(\mu_0^N,\mu_0^{N'}) e^{2T + C_TW_2(\mu_0^N,\mu_0^{N'})}.
		\]
		This implies that the $(\mu_.^N)_{N\in \mathbb{N}}$ converges to some $\mu_.\in \mathcal{C}\lt([0,T];\mathcal{P}_2(\R^d \times \R^d)\rt)$ almost surely. Then it remains to prove that $(\mu_t)_{t\in [0,T]}$ solves our main equation \eqref{main_sde_ito}. Note that it follows from Lemma \ref{lem:EstVel} that the solution $\mu_t$ is compactly supported in velocity since the initial data is compactly supported in velocity. 
		
		For $t \in [0,T]$ and $\phi \in \mc^2_b(\R^d \times \R^d)$, we define
		\bq\label{eq:I}
		\begin{aligned}
			I_{\phi,t}(\mu)&:=\lal\mu_t,\phi \ral -\lal \mu_0, \phi\ral - \int_0^t \lal \mu_s, v \cdot\nabla_x \phi + (F[\mu_s]+ \sigma v)\cdot\nabla_v \phi \ral \,ds \cr
			&\qquad + \sqrt{2\sigma}\int_0^t \lal \mu_s,  v\cdot \nabla_v \phi \ral \,dB_s
			- \sigma \int_0^t \lal \mu_s, v \otimes  v : \nabla^2_v \phi \ral\,ds.
		\end{aligned}
		\eq
		We notice that $I_{\phi,t}(\mu^N)=0$ for any $N\in \mathbb{N}$ almost surely. Thus we find
		$$\begin{aligned}
		I_{\phi,t}(\mu)
		&=I_{\phi,t}(\mu)-I_{\phi,t}(\mu^N)\\
		&= \lal\mu_t-\mu_t^N,\phi \ral -\lal \mu_0-\mu_0^N, \phi\ral  - \int_0^t \lal \mu_s-\mu_s^N, v \cdot\nabla_x \phi + \sigma v\cdot\nabla_v \phi \ral \,ds- \sigma \int_0^t \lal \mu_s-\mu_s^N, v \otimes  v : \nabla^2_v \phi \ral\,ds\\
		&\quad  + \sqrt{2\sigma}\int_0^t \lal \mu_s-\mu_s^N,  v\cdot \nabla_v \phi \ral \,dB_s  -\int_0^t \lal \mu_s, F[\mu_s]\cdot\nabla_v \phi \ral- \lal \mu^N_s, F[\mu^N_s]\cdot\nabla_v \phi \ral \,ds\\
		&=: \sum_{i=1}^6 I_i^N.
		\end{aligned}$$
		We then claim that $ \sum_{i=1}^6 I_i^N \to 0$ as $N \to \infty$ in probability to conclude that $\mu_t$ is the solution to equation \eqref{main_spde} in the sense of Definition \ref{def_weak}. \newline
		$\diamond$ Estimate of $ \sum_{i=1}^4 I_i^N$: Note that the terms $I_i^N, 1 \leq i \leq 4$ are linear. Since $\phi \in \mc^2_b(\R^d)$, and $\mu_t^N$ and $\mu_t$ are compactly supported in velocity, we easily obtain that
		\[
		\lt|\sum_{i=1}^4 I_i^N \rt|\leq C\sup_{0 \leq t \leq T}W_2(\mu^N_t,\mu_t) \to 0 \quad \mbox{as} \quad N \to \infty, \quad \mbox{almost surely}.
		\]
		$\diamond$ Estimate of $I_5^N$: Note that $I_5^N$ is a linear stochastic integral term. Let us denote by
		\[
		H_s^N:=\int_{\R^d \times \R^d} v\cdot\nabla_v \phi(x,v) \lt(\mu_s^N(dx,dv)-\mu_s(dx,dv)\rt).
		\]
		Since $(x,v)\mapsto v\cdot\nabla_v \phi(x,v)$ is locally bounded and Lipschitz, and $\mu_s^N$ converges to $\mu_s$ almost surely(uniformly in time), $H_s^N$ converges to $0$ almost surely (uniformly in time) as $N$ goes to infinity. Note also that $\lt|H_s^N\rt|\leq C\|\nabla_v \phi \|_{L^\infty}$ almost surely. Thus, by the stochastic dominated convergence theorem \cite[Theorem 2.12, Chapter IV]{RY}, we have
		\[
		\lt| \int_0^t H_s^N dB_s \rt| \rightarrow 0 \quad \mbox{as} \quad N\rightarrow \infty \quad \mbox{in probability.}
		\]
		$\diamond$ Estimate of $I_6^N$: We now estimate the nonlinear term. We rewrite 
		$$\begin{aligned}
		I_6^N& =\lt|\int_0^t \int_{\R^d \times \R^d} \nabla_v\phi(x,v)\cdot\lt(F[\mu_s^N](x,v)\mu^N_s(dx,dv)-F[\mu_s](x,v)\mu_s(dx,dv)\rt) ds\rt|\\
		&\leq \lt|\int_0^t \int_{\R^d \times \R^d} \nabla_v\phi(x,v) \cdot \lt(F[\mu_s^N](x,v)-F[\mu_s](x,v)\rt)\mu_s^N(dx,dv)\,ds\rt|\\
		&\quad+ \lt|\int_0^t \int_{\R^d \times \R^d} \nabla_v\phi(x,v) \cdot F[\mu_s](x,v)\lt(\mu^N_s(dx,dv)-\mu_s(dx,dv)\rt)\,ds\rt|,
		\end{aligned}$$
		where the second term on the right hand side goes to $0$ as $N$ goes to infinity almost surely since $\nabla_v\phi \cdot F[\mu_t]$ is locally bounded and Lipschitz, and $\mu_t^N$ converges to $\mu_t$ as $N$ goes to infinity uniformly in time. Indeed, we get
		\[
		\lt|\int_0^t \int_{\R^d \times \R^d} \nabla_v\phi(x,v)\cdot F[\mu_s](x,v)\lt(\mu^N_s(dx,dv)-\mu_s(dx,dv)\rt)ds\rt| \leq C\sup_{0 \leq t \leq T}W_2(\mu^N_t,\mu_t) \to 0,
		\]
		as $N \to \infty$, almost surely. For the estimate of the first term, we rewrite it that as
		$$\begin{aligned}
		&\lt|\int_0^t \int_{\R^d \times \R^d} \nabla_v\phi(x,v) \cdot \lt(F[\mu_s^N](x,v)-F[\mu_s](x,v)\rt)\mu_s^N(dx,dv)\,ds\rt|\\
		&\quad= \bigg|\int_0^t \int_{\R^d \times \R^d} \lt(\int_{\R^d \times \R^d}\nabla_v\phi(x,v) \cdot \psi(|x-y|)(w-v)\mu_s^N(dx,dv)\rt) \lt(\mu_s^N(dy,dw)-\mu_s(dy,dw)\rt)ds\bigg|.
		\end{aligned}$$
		In order to treat this last term we use the following lemma. 
		\begin{lemma}\label{lem_conv2}  Define the functional $G(\mu_t^N)$ on $\R^d \times \R^d$ as
			\[
			G(\mu^N_t):(y,w)\mapsto \lt(\int_{\R^d \times \R^d}\nabla_v\phi(x,v) \cdot \psi(|x-y|)(w-v)\,\mu_t^N(dx,dv)\rt).
			\] 
			Then $\mathbb{P}$-almost surely, for any $t\in [0,T]$, the vector field $G(\mu_t^N)$ is bounded and locally Lipschitz in velocity.
		\end{lemma}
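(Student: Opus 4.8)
The plan is to deduce the statement from two elementary ingredients, both already at our disposal: the regularity of the data — namely that $\psi$ is bounded and Lipschitz (since $\psi\in\mc^1_b(\R_+)$) and that $\nabla_v\phi$ is bounded (since $\phi\in\mc^2_b(\R^d\times\R^d)$) — and the almost sure boundedness of the velocity support of the empirical measure on $[0,T]$. With these in hand, $G(\mu^N_t)$ is essentially an average of the bounded Lipschitz kernels $(y,w)\mapsto\psi(|x-y|)\,\nabla_v\phi(x,v)\cdot(w-v)$ against a measure that lives in a fixed ball in velocity, and both assertions reduce to short estimates.

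First I would observe that $\mu^N_0=\frac1N\sum_i\delta_{(X^i_0,V^i_0)}$, being a finite sum of Dirac masses, is compactly supported in velocity, say $\mathrm{supp}_v(\mu^N_0)\subseteq B(0,R)$; using the mesh construction of the initial approximation one may even take $R$ independent of $N$. Lemma~\ref{lem:EstVel} and Remark~\ref{rem:comsup}, applied to the empirical measure (which is a solution of \eqref{main_spde} by Section~\ref{sec_weak}), then furnish an almost surely finite random variable $R_T$ — depending only on $\psi$, $T$, $\sigma$, $\sup_{t\in[0,T]}|B_t|$ and $R$, and in particular not on $N$ — such that, on a fixed full-probability event, $\mathrm{supp}_v(\mu^N_t)\subseteq B(0,R_T)$ for every $t\in[0,T]$. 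From now on $\omega$ is fixed in this event.

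For boundedness I would simply estimate $|G(\mu^N_t)(y,w)|\le\psi_M\|\nabla_v\phi\|_{L^\infty}\int|w-v|\,\mu^N_t(dx,dv)$; since $\mu^N_t$ is supported in velocity in $B(0,R_T)$, the integral is at most $|w|+R_T$, so $G(\mu^N_t)$ is bounded on $\R^d\times K$ for every compact $K\subseteq\R^d$, uniformly in $t\in[0,T]$ and in $N$ — in particular on the region $\R^d\times B(0,R_T)$ where the measures one integrates against are concentrated. For Lipschitz continuity in velocity I would note that, for fixed $y$, the map $w\mapsto G(\mu^N_t)(y,w)$ is affine with gradient $\int\psi(|x-y|)\,\nabla_v\phi(x,v)\,\mu^N_t(dx,dv)$ of norm at most $\psi_M\|\nabla_v\phi\|_{L^\infty}$; hence $G(\mu^N_t)$ is globally Lipschitz in the velocity variable, with an $N$- and $t$-independent constant. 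The same splitting of $G(\mu^N_t)(y_1,w_1)-G(\mu^N_t)(y_2,w_2)$ into a $\psi$-increment term (controlled by $\|\psi\|_{Lip}|y_1-y_2|$ together with $|w-v|\le 2R_T$ on the support) and a $w$-increment term shows that $G(\mu^N_t)$ is in fact Lipschitz jointly in $(y,w)$ on $\R^d\times B(0,R_T)$ with a constant independent of $N$; this is the form needed afterwards to bound the remaining part of $I_6^N$ by a multiple of $W_1(\mu^N_s,\mu_s)\le W_2(\mu^N_s,\mu_s)$.

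There is no genuine analytic difficulty here: the only step that does real work is the velocity-support bound of Step~1, and that has already been established in Lemma~\ref{lem:EstVel}/Remark~\ref{rem:comsup}; everything else is bookkeeping of constants. The one point to keep in mind is that ``bounded'' must be read on the (velocity-)compact region where $\mu^N_t$ and $\mu_t$ sit — it cannot hold on all of $\R^d\times\R^d$, since $G(\mu^N_t)$ is affine and non-constant in $w$ — but that is exactly the region on which the lemma is subsequently applied.
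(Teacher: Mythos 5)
Your proposal is correct and follows essentially the same route as the paper: boundedness via $|G(\mu^N_t)(y,w)|\le \psi_M\|\nabla_v\phi\|_{L^\infty}(|w|+\text{const})$, and Lipschitz continuity by splitting the increment into a $w$-difference term (controlled by $\psi_M\|\nabla_v\phi\|_{L^\infty}$) and a $\psi$-increment term (controlled by $\|\psi\|_{Lip}$ together with the velocity moment/support bound from Lemma~\ref{lem:EstVel}). Your observation that ``bounded'' can only mean bounded on the velocity-compact region where the measures live is consistent with the paper's own estimate, which carries the factor $(1+|w|)$.
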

		\begin{proof} For any $\mu\in \mathcal{P}_2(\R^d \times \R^d)$ we first easily find that 
			\[
			\lt|G(\mu)\rt|_{L^\infty} \leq C\|\nabla_v \phi\|_{L^\infty}(1 + |w|)\psi_M,
			\]
			for some $C > 0$. Next for $(y,w), (y',w') \in \R^{2d} \times \R^{2d}$, we obtain 
			$$\begin{aligned}
			|G(\mu)(y,w) - G(\mu)(y',w')| 
			&= \lt|\int_{\R^d \times \R^d} \nabla_v \phi(x,v)\cdot \lt(\psi(x-y)(w-v) - \psi(x-y')(w'-v)\rt)\mu(dx,dv)\rt|\cr
			&\leq \lt|\int_{\R^d \times \R^d} \psi(x-y) \lt(\nabla_v \phi(x,v)\cdot \lt(w - w'\rt)\rt) \mu(dx,dv)\rt|\cr
			&\quad + \lt|\int_{\R^d \times \R^d} \lt(\psi(x-y) - \psi(x-y')\rt) \lt(\nabla_v \phi(x,v)\cdot (w'-v)\rt) \mu(dx,dv)\rt|\cr
			&=: I + J,\cr
			\end{aligned}$$
			where $I$ is easily estimated by.
			\[
			I \leq \psi_M\|\nabla_v \phi\|_{L^\infty}|w - w'|.
			\]
			For the estimate of $J$, we also easily get
			\[
			J\leq \|\psi\|_{Lip}\|\nabla_v \phi\|_{L^\infty} \lt(|w'|+\lt(\int |v|^2\mu(dx,dv)\rt)^{1/2} \rt)|y-y'|.
			\]
			Hence it holds
			$$\begin{aligned}
			|G(\mu_t^N)(y,w) - G(\mu_t^N)(y',w')|\leq C_{\psi,\phi}\lt(1+|w'| + \lt(\int |v|^2\mu_t^N(dx,dv)\rt)^{1/2} \rt) \lt|(y,w) - (y',w')\rt|.
			\end{aligned}$$
			This together with Lemma \ref{lem:EstVel} completes the proof.
		\end{proof}
		Using Lemma \ref{lem_conv2}, since $\mu_t^N$ and $\mu_t$ are compactly supported in velocity and $\mu_t^N$ converges to $\mu_t$ almost surely uniformly in time, we have
		\[
		\lt|\int_0^t \int_{\R^d \times \R^d} \nabla_v\phi(x,v)\cdot \lt(F[\mu_s^N](x,v)\mu^N_s(dx,dv)-F[\mu_s](x,v)\mu_s(dx,dv)\rt)ds\rt|\rightarrow 0 \quad \mbox{as} \quad N \to \infty,
		\]
		almost surely. 
		
		Putting all those estimates together deduces that $I_{\phi,t}(\mu)$ defined in \eqref{eq:I} is equal to sequences which go to $0$ as $N \to \infty$ in probability, thus that is equal to $0$ almost surely. This concludes that $(\mu_t)_{t\in [0,T]}$ satisfies the weak formulation in Definition \ref{def_weak} $(ii)$. We now show that $\mu_t$ is weakly continuous. Note that we can always assume that the process $(\langle \mu_t,\phi\rangle)_{t\in[0,T]}$ is adapted by changing the notion of the filtration $(\mathcal{F}_t)_{t\in [0,T]}$. Indeed, that can be obtained as the limit of the sequence of the adapted processes $(\left\langle \mu^N_t,\phi\right\rangle)_{t\in[0,T]}$ in law. Thus it only remains to establish the existence of a continuous version of the process $(\left\langle \mu_t,\phi\right\rangle)_{t\in[0,T]}$ to complete the proof. Recall that $\mu_t$ obtained above satisfies
		$$\begin{aligned}
		\lal\mu_t,\phi \ral &= \lal \mu_0, \phi\ral + \int_0^t \lal \mu_s, v \cdot\nabla_x \phi + (F[\mu_s]+ \sigma v)\cdot\nabla_v \phi \ral \,ds + \sigma \int_0^t \lal \mu_s, v \otimes  v : \nabla^2_v \phi \ral\,ds  - \sqrt{2\sigma}\int_0^t \lal \mu_s,  v\cdot \nabla_v \phi \ral \,dB_s \cr
		&=:  \lal \mu_0, \phi\ral+\int_0^t \mathcal{L}_s\, ds+ \sqrt{2\sigma}\int_0^t\mathcal{I}_s\,dB_s.
		\end{aligned}$$
		In the rest of this section, we estimate the Lebesgue and it\^o integrals as follows.

		$\diamond$ {\it Estimate of the Lebesgue integral}: Since $\phi \in \mc^2_c(\R^d \times \R^d)$ is compactly supported in both position and velocity, we get
		\[
		\lal \nu, v \cdot\nabla_x \phi+\sigma v\cdot \nabla_v \phi +v \otimes  v : \nabla^2_v \phi \ral \leq C_{\sigma,\phi},
		\] 
		for any $\nu \in \mathcal{P}(\R^d \times \R^d)$, where $C_{\sigma,\phi}$ is a constant which depends only on $\sigma,\phi$. This gives
		\[
		\lt|\mathcal{L}_s\rt|\leq C_{\sigma,\phi}+\lt| \lal \mu_s,F[\mu_s]\cdot\nabla_v \phi \ral \rt|.
		\]
		On the other hand, it follows from Remark \ref{rem:comsup} that $\mu_s$ is compactly supported in velocity. Thus we obtain that the alignment force field is almost surely bounded from above by 
		\[
		\lt| F[\mu_s] \rt| \leq 2\psi_M R^2e^{\psi_M s-2\sqrt{2\sigma}B_s}\lt(R^2\frac{1-e^{-\psi_Ms}}{\psi_M}+1\rt). 
		\]
		Hence we have that $(\int_0^t \mathcal{L}_s\, ds)_{t\in[0,T]}$ is almost surely Lipschitz in time.
		
		$\diamond$ {\it Estimate of the It\^o integral:} Applying Burkholder-Davis-Gundy's inequality yields 
		$$\begin{aligned}
		\E\lt[ \sup_{s<u<t} \lt|\int_s^u \mathcal{I}_s \,dB_s\rt|^{2p} \rt] &\leq C_p \E\lt[ \lt|\int_s^t \mathcal{I}^2_s \,ds\rt|^{p} \rt]= C_p \E\lt[ \lt|\int_s^t \lal \mu_s,  v\cdot \nabla_v \phi \ral ^2 \,ds\rt|^{p} \rt]\cr
		&\leq C_{p,\phi} \E\lt[ \lt|\int_s^t \lal \mu_s,  |v|^2 \ral \,ds\rt|^{p} \rt]\\
		&\leq C_{p,\phi} \E\lt[ \lt|\int_s^t \lal \mu_0,  |v|^2 \ral e^{-2\sqrt{2\sigma}B_t} \,ds\rt|^{p} \rt]\\
		&\leq C_{p,\phi,\mu_0} |t-s|^p \E\lt[  e^{2p\sqrt{2\sigma}\sup_{r\in[0,T]} |B_r|}  \rt],
		\end{aligned}$$
		for any $p>1$ and $s,t \in [0,T]$, where we used Jensen's inequality with $\mu_s\in \mathcal{P}(\R^{2d})$ in the second line, the large-time behavior estimate in Lemma \ref{lem:EstVel} in the third line, and the fact that $\mu_0$ is compactly supported in velocity in the fourth line. On the other hand, classical properties of one dimensional Brownian motion provide 
		\[
		\E\lt[  e^{2p\sqrt{2\sigma}\sup_{r\in[0,T]} |B_r|}  \rt] \leq 2\E\lt[  e^{2p\sqrt{2\sigma}|B_T|}  \rt]\leq C_{\sigma,T,p},
		\]
		putting all those estimates together we find that there is a constant $C_{p,\mu_0,\phi,\sigma,T}$ such that 
		\[
		\E\lt[ \sup_{s<u<t} \lt|\int_s^u \mathcal{I}_s \,dB_s\rt|^{2p} \rt]\leq C_{p,\mu_0,\phi,\sigma,T}|t-s|^p.
		\]
		Finally, we use Kolmogorov's continuity theorem to conclude that there exists a continuous version of the process $(\int_0^t \mathcal{I}_s \,dB_s)_{t\in[0,T]}$. 
		%
		%
		%
		%
		\section{Phase change phenomenon: Proof of Theorem \ref{thm_spde_f}}\label{sec_lt}
		In this section, we provide details of the proof of Theorem \ref{thm_spde_f} on the flocking and non flocking estimates for the stochastic kinetic equation \eqref{main_spde}. As mentioned in Introduction, we employ a similar strategy used for the particle system \eqref{main_sde} proposed in \cite{AH, TLY}. 
		
		Recall the variance functions of stochastic particle velocity fluctuation around $\bar v_t$:
		\[
		E_t = \int_{\R^d \times \R^d} |\bar v_t - v|^2 \mu_t(dx,dv).
		\]
		Note that
		$$\begin{aligned}
		\bar v_t &= \bar v_0 + \int_0^t \int_{\R^d \times \R^d} v\,\mu_s (dx,dv)\,ds \cr
		&=\int_0^t \int_{\R^d \times \R^d} F[\mu_s](x,v)\mu_s(dx, dv)\, ds + \sqrt{2\sigma}\int_0^t \int_{\R^d \times \R^d} (\bar v_s - v)\mu_s(dx, dv) \circ dB_s\cr
		&= 0,
		\end{aligned}$$
		where we used
		$$\begin{aligned}
		\int_{\R^d \times \R^d} F[\mu_s](x,v)\mu_s(dx, dv) &= \int_{\R^{2d} \times \R^{2d}} \psi(|x-y|)(w-v)\mu_s(dx, dv)\mu_s(dy, dw)= 0.
		\end{aligned}$$
		This, together with a straightforward computation, yields
		$$\begin{aligned}
		\frac12 dE_t &= \int_{\R^d \times \R^d} (v - \bar v_t) \cdot F[\mu_t](x,v) \mu_t(dx, dv)\,dt  + 2\sigma \int_{\R^d \times \R^d} |v - \bar v_t|^2 \mu_t (dx, dv)\,dt\cr
		&\quad - \sqrt{2\sigma}\int_{\R^d \times \R^d} |v - \bar v_t|^2 \mu_t(dx, dv)\,dB_t\cr
		&= -\frac12\int_{\R^{2d} \times \R^{2d}}\psi(|x-y|)|w-v|^2 \mu_t(dx, dv)\mu_t(dy, dw)\,dt + 2\sigma E_t\,dt   - \sqrt{2\sigma}E_t \,dB_t.
		\end{aligned}$$
		Thus the process $E_t$ satisfies 
		\begin{align}\label{witho_E}
			\begin{aligned}
				dE_t &= -\int_{\R^{2d} \times \R^{2d}}\psi(|x-y|)|w-v|^2 \mu_t(dx, dv)\mu_t(dy, dw)\,dt + 4\sigma E_t\,dt  - 2\sqrt{2\sigma}E_t \,dB_t.
			\end{aligned}
		\end{align}
		Taking the expectation to the above gives
		\bq\label{gro_E}
		\frac12\frac{d}{dt}\E[E_t] + \frac12\E\lt[\int_{\R^{2d} \times \R^{2d}}\psi(|x-y|)|w-v|^2 \mu_t(dx, dv)\mu_t(dy, dw) \rt] = 2\sigma\E[E_t].
		\eq
		Note that
		\[
		\int_{\R^{2d} \times \R^{2d}}|w-v|^2 \mu_t(dx, dv)\mu_t(dy, dw) = 2 E_t,
		\]
		due to $\bar v_t = 0$. Thus we get
		\[
		\psi_m\E[E_t] \leq \frac12\E\lt[\int_{\R^{2d} \times \R^{2d}}\psi(|x-y|)|w-v|^2 \mu_t(dx, dv)\mu_t(dy, dw) \rt] \leq \psi_M \E[E_t],
		\]
		and this together with \eqref{gro_E} provides
		\[
		-2(\psi_M - 2\sigma)\E[E_t] \leq \frac{d}{dt}\E[E_t] \leq -2(\psi_m - 2\sigma)\E[E_t].
		\]
		Applying Gronwall's inequality, we have
		\[
		\E[E_0]e^{-2(\psi_M - 2\sigma) t} \leq \E [E_t] \leq \E[E_0]e^{-2(\psi_m - 2\sigma) t} \quad t \geq 0.
		\]
		This completes the proof of Theorem \ref{thm_spde_f}.

		As mentioned in Remark \ref{rmk_without}, we can also obtain the convergence of the variance functional $E_t$ without taking the expectaiton even though it does not provide the phase change phenomenon. Let us go back to equation \eqref{witho_E}. Using the similar strategy as before, we estimate the drift term in \eqref{witho_E} as 
		\[
		- 2(\psi_M-2\sigma)E_t dt - 2\sqrt{2\sigma}E_t \,dB_t \leq dE_t \leq - 2(\psi_m -2\sigma)E_t dt - 2\sqrt{2\sigma}E_t \,dB_t, \quad \mbox{a.s.,}
		\]
		Applying It\^o's formula gives
		\[
		d \log E_t = \frac{d E_t}{E_t} - \frac{|dE_t|^2}{2|E_t|^2} = \frac{d E_t}{E_t} - 4\sigma dt \leq -2\psi_m dt - 2\sqrt{2\sigma} dB_t \quad \mbox{a.s.}
		\]
		Taking the time integration to the above inequality gives
		\[
		\log E_t \leq \log E_0 - 2\psi_m t - 2\sqrt{2\sigma}\int_0^t dB_s = \log E_0 - 2\psi_m t - 2\sqrt{2\sigma}B_t,
		\]
		i.e.,
		\bq\label{eqn_w}
		E_t \leq E_0 \exp\lt( -2\psi_m t - 2\sqrt{2\sigma} B_t\rt).
		\eq
		On the other hand, by using the fact that for any $\epsilon > 0$, almost surely, there exists $t_0 > 0$ such that $|B_t| \leq \epsilon t$ for all $t \geq t_0$, we can further estimate 
		\[
		E_t \leq E_0 \exp\lt( -2(\psi_m - \epsilon_0)t \rt) \quad \mbox{a.s.},
		\]
		for some $0<\epsilon_0 < \psi_m$ and $t \geq t_* > 0$.
		
		Summarizing the above discussion, we have the following result. 
		\begin{proposition}\label{prop_without} Let $\mu$ be a measure-valued solution for the equation \eqref{main_spde}. Suppose that there exists a positive constant $\psi_m$ such that $0 < \psi_m \leq \psi(s)$ for $s \in \R_+$. Then we have the following time asymptotic flocking estimate. 
			\[
			\int_{\R^d \times \R^d} |\bar v_0 - v|^2 \mu_t(dx, dv) \to 0 \quad \mbox{as} \quad t \to \infty, \quad a.s.,
			\]
			at least exponentially fast.
		\end{proposition}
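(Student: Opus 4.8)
The plan is to read the conclusion off the It\^o equation \eqref{witho_E} for $E_t$, using only the lower bound $\psi\ge\psi_m$ and the stochastic Gronwall inequality of Lemma \ref{lem_gro}. As in the computation preceding the statement one may assume $\bar v_0=0$ by conservation of momentum, so that $\bar v_t\equiv 0$, the two displayed expressions for $E_t$ coincide, and $\int_{\R^d\times\R^d}\int_{\R^d\times\R^d}|w-v|^2\mu_t(dx,dv)\mu_t(dy,dw)=2E_t$. The one step that really needs care is the derivation of \eqref{witho_E} itself: $\phi(x,v)=|v|^2$ is not admissible in Definition \ref{def_weak}, so I would first test the weak formulation against a smooth compactly supported truncation $|v|^2\chi_R(v)$ and then let $R\to\infty$, using the a priori bound $\int_{\R^d\times\R^d}|v|^2\mu_t(dx,dv)\le\big(\int_{\R^d\times\R^d}|v|^2\mu_0(dx,dv)\big)e^{-2\sqrt{2\sigma}B_t}$ from Lemma \ref{lem:EstVel} to justify the convergence of the Lebesgue integrals, together with a stochastic dominated convergence argument for the martingale part. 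This is the main (though routine) obstacle; everything after it is immediate.

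Granting \eqref{witho_E}, the bound $\psi\ge\psi_m$ and the identity above give $-\int\int\psi|w-v|^2\mu_t\mu_t+4\sigma E_t\le(4\sigma-2\psi_m)E_t$, so that $E_t=E_0+\int_0^t\beta_s\,ds-2\sqrt{2\sigma}\int_0^t E_s\,dB_s$ with $\beta_s\le(4\sigma-2\psi_m)E_s$. Applying Lemma \ref{lem_gro} with $c_1=4\sigma-2\psi_m$, $c_2=2\sqrt{2\sigma}$ and $A_s\equiv0$, and noting that $c_1-c_2^2/2=-2\psi_m$ (the diffusion contribution exactly cancels the $4\sigma$ in the drift), yields the pathwise estimate
\[
0\le E_t\le E_0\,e^{-2\psi_m t}\,e^{-2\sqrt{2\sigma}B_t}\qquad\mbox{for all }t\ge0,\ \mbox{almost surely.}
\]
Equivalently, on the event $\{E_0>0\}$ one may apply It\^o's formula to $\log E_t$, where $d\log E_t=dE_t/E_t-4\sigma\,dt\le -2\psi_m\,dt-2\sqrt{2\sigma}\,dB_t$, and integrate; if $E_0=0$ then $\mu_0$, hence $\mu_t$, is a Dirac mass in velocity and there is nothing to prove.

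To finish, I would invoke the almost sure sublinearity of Brownian motion: for every $\epsilon>0$ there is an almost surely finite time $t_0=t_0(\omega)$ with $|B_t|\le\epsilon t$ for $t\ge t_0$. Choosing $\epsilon$ small enough that $\epsilon_0:=\sqrt{2\sigma}\,\epsilon<\psi_m$, the previous display gives $E_t\le E_0\exp\big(-2(\psi_m-\epsilon_0)t\big)$ for $t\ge t_0$, almost surely, with $0<\epsilon_0<\psi_m$; in particular $E_t=\int_{\R^d\times\R^d}|\bar v_0-v|^2\mu_t(dx,dv)\to0$ as $t\to\infty$, at least exponentially fast, almost surely. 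Thus the only genuine work is the justification of the energy identity \eqref{witho_E} for measure-valued solutions; the comparison step (Lemma \ref{lem_gro}) and the Brownian asymptotics are both off the shelf.
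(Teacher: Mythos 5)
Your proposal is correct and follows essentially the same route as the paper: derive the It\^o equation \eqref{witho_E} for $E_t$, use $\psi\geq\psi_m$ together with $\iint|w-v|^2\mu_t\mu_t=2E_t$ to get the pathwise bound $E_t\leq E_0\exp(-2\psi_m t-2\sqrt{2\sigma}B_t)$, and conclude via the almost sure sublinearity of $B_t$. Your two refinements — justifying \eqref{witho_E} by truncating the test function $|v|^2$, and using Lemma \ref{lem_gro} with $A_s\equiv 0$ so as to avoid the implicit positivity assumption in the paper's $\log E_t$ computation — are welcome but do not change the argument.
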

		
		\begin{remark} The stochastic process $E_t$ in \eqref{witho_E} resembles a geometric Brownian motion. Note that the inequality \eqref{eqn_w} can be rewritten as
			\[
			E_t \leq E_0 e^{-2\sqrt{2\sigma}B_t^{(\psi_m)/(\sqrt{2\sigma})}},
			\]
			where $B^{\nu}_t$ denotes the one dimensional Brownian motion with constant drift $\nu \in \R$, i.e., $B^\nu_t = B_t + \nu t$.
		\end{remark}
		
		\begin{remark} When there is no multiplicative noise $\sigma=0$, then the system \eqref{main_sde} becomes the original Cucker-Smale model. For the Cucker-Smale model, the unconditional flocking estimate can be obtained if the communication weight $\psi$ is not integrable, see \cite{CHL}. If not, suitable assumptions for the initial configurations are needed for the flocking estimate. In \cite{AH}, the multiplicative noise is considered for the stochastic particle system \eqref{main_sde} in the It\^o sense, and in that case, we can get the flocking behavior for any nonnegative communication weight $\psi$, i.e., taking into account the multiplicative noises in the Cucker-Smale model enable us to have the unconditional flocking estimate. This implies that the multiplicative noise in the It\^o sense plays a role as a (stochastic) control which enhances the flocking behavior of particles. 
		\end{remark}
		
		%
		%
		%
		%

		\section*{Acknowledgments}
		Y-PC is supported by NRF 2017R1C1B2012918 and 2017R1A4A1014735. 
		
		%
		%
		%
		%

	\end{document}